\documentclass[
final
]{dmtcs-episciences}


\usepackage[utf8]{inputenc}
\usepackage{subfigure}


\usepackage{amsthm}
\usepackage{amsmath}

\usepackage[shortlabels]{enumitem}
\usepackage{algorithmic}
\usepackage{algorithm}
\algsetup{indent=2em}

\usepackage{tikz}
\usetikzlibrary{patterns,arrows,decorations.pathreplacing}

\theoremstyle{plain}
\newtheorem{theorem}{Theorem}[section]
\newtheorem{observation}[theorem]{Observation}
\newtheorem{proposition}[theorem]{Proposition}

\newtheorem{lemma}[theorem]{Lemma}
\newtheorem{corollary}[theorem]{Corollary}

\theoremstyle{definition}

\newtheorem{example}[theorem]{Example}

\newcommand{\Av}{\operatorname{Av}}
\newcommand{\C}{\mathcal{C}}
\newcommand{\st}{\::\:}
\newcommand{\type}[1]{T(#1)}

\newcommand{\la} 
	{
		\begin{tikzpicture}[scale=.2, anchor=base]
			\draw[fill, ultra thin] (0,0)--(0.5,0.25)--(0.5,-0.25)--cycle;
 		\end{tikzpicture}
	}
\newcommand{\ra} 
	{
		\begin{tikzpicture}[scale=.2, xscale=-1, anchor=base]
			\draw[fill, ultra thin] (0,0)--(0.5,0.25)--(0.5,-0.25)--cycle;
 		\end{tikzpicture}
	}
\newcommand{\ua} 
	{
		\begin{tikzpicture}[scale=.2, anchor=base]
			\draw[fill, ultra thin] (0,0)--(0.5,0)--(0.25,0.5)--cycle;
 		\end{tikzpicture}
	}
\newcommand{\da} 
	{
		\begin{tikzpicture}[scale=.2, yscale=-1, anchor=base]
			\draw[fill, ultra thin] (0,0)--(0.5,0)--(0.25,0.5)--cycle;
 		\end{tikzpicture}
	}

\newcommand{\cprob}[3]{
    \begin{center}
      \fbox{
        \parbox{0.95\textwidth}{
          #1\smallskip\\
          \begin{tabular}{rp{0.73\textwidth}}
            \textit{Input:\ } & #2\\
            \textit{Question:\ } & #3
          \end{tabular}
        }
      }
    \end{center}
}

%

\usepackage[square,sort,comma,numbers]{natbib}

\title{The Complexity of Pattern Matching for $321$-Avoiding and Skew-Merged Permutations}

\author
	[Michael Albert, Marie-Louise Lackner, Martin Lackner, and Vincent Vatter]
	{
	\begin{tabular}{ccc}
	Michael Albert\affiliationmark{1}
	&\quad\quad&
	Marie-Louise Lackner\affiliationmark{2}\thanks{M.-L. Lackner's research was supported by the Austrian Science Foundation FWF, grant P25337-N23.}
	\\
	Martin Lackner\affiliationmark{3}\thanks{M. Lackner's research was supported by the Austrian Science Foundation FWF, grants P25518-N23 and Y698, and by the European Research Council (ERC) under grant number 639945 (ACCORD).}
	&\quad\quad&
	Vincent Vatter\affiliationmark{4}\thanks{Vatter's research was partially supported by the National Science Foundation under Grant Number DMS-1301692.}
	\end{tabular}
	}

\affiliation{
	Department of Computer Science, University of Otago, Dunedin, New Zealand\\
	Institute of Discrete Mathematics and Geometry, TU Wien, Vienna, Austria\\
	Department of Computer Science, University of Oxford, Oxford, UK\\
	Department of Mathematics, University of Florida, Gainesville, Florida, USA
	}

\keywords{pattern matching, permutation class, permutation pattern}

\received{2015-10-22} 
\revised{2016-12-05} 
\accepted{2016-12-20}
\begin{document}
\publicationdetails{18}{2016}{2}{11}{1308}

\maketitle

\begin{abstract}
The \textsc{Permutation Pattern Matching} problem, asking whether a pattern permutation $\pi$ is contained in a text permutation $\tau$, is known to be \textsf{NP}-complete. We present two polynomial time algorithms for special cases. The first is applicable if both $\pi$ and $\tau$ are $321$-avoiding while the second is applicable if both permutations are skew-merged. Both algorithms have a runtime of $O(kn)$, where $k$ is the length of $\pi$ and $n$ the length of $\tau$.
\end{abstract}

\section{Introduction}

In this paper, a permutation is a bijective function from $[n]$ to itself, where $n$ is a positive integer and $[n] = \{1,2,\dots,n\}$. Therefore, a permutation, $\pi : [n] \to [n]$ \emph{is} the set of ordered pairs $(i, \pi(i))$. We occasionally write specific permutations in the usual one line notation, e.g., $321$ represents the permutation of $[3]$ equal to $\{(1,3), (2,2), (3,1)\}$. The \emph{size} of $\pi$ is just the cardinality of this set, and we denote the elements, also called points, of a permutation by variables such as $x$ and $y$. We adopt the usual conventions with respect to order of such points, i.e., $(i, \pi(i))$ lies to the left of $(j, \pi(j))$ if $i < j$ and above $(k, \pi(k))$ if $\pi(i) > \pi(k)$, with corresponding definitions for `to the right of' and `below'. Given an element $x$ in a permutation $\pi$, we define (wherever possible): \\
\begin{tabular}{ll}
$x^{\la}$ & the element immediately to its left, \\
$x^{\ra}$ & the element immediately to its right, \\
$x^{\ua}$ & the element immediately above it, and \\
$x^{\da}$ & the element immediately below it.
\end{tabular} \\
 We use the symbol $\bot$ to represent `undefined'. Any operator applied to $\bot$ also yields $\bot$. 
 For example, in the permutation $31254$ and for $x=3$ we have: $x^{\la}=\bot$, $x^{\ra}=1$, $x^{\ua}=4$ and $x^{\da}=2$.

Let $\pi$ and  $\tau$ be two permutations. An injective function, $f$, from $\pi$ into $\tau$ is an \emph{embedding} if, for all elements $x$ and $y$ of $\pi$, the elements $f(x)$ and $f(y)$ of $\tau$ are in the same relative order as $x$ and $y$ (e.g., if $x$ lies below and to the right of $y$, then $f(x)$ also lies below and to the right of $f(y)$). \label{def-embedding} If there is an embedding from $\pi$ into $\tau$ then we say that $\tau$ \emph{contains} $\pi$. If not, then we say that $\tau$ \emph{avoids} $\pi$. The following problem, analogous to problems related to detecting occurrences of patterns in words, is central with respect to these concepts:

\cprob
	{\textsc{Permutation Pattern Matching} (PPM)}
	{A text permutation $\tau$ of size $n$ and a pattern $\pi$ of size $k$.}
	{Does $\tau$ contain $\pi$?}

Bose, Buss, and Lubiw~\cite{bose:pattern-matchin:} showed in 1998 that the PPM problem is \textsf{NP}-complete\footnote{Note that because both the pattern and text are regarded as input, the size of the input is $n+k$. Were we to regard the size of the pattern as fixed, then the trivial $\mathcal{O}\left({n\choose k}\cdot k\right)$ algorithm would be polynomial time.}.

The recent work of Guillemot and Marx~\cite{guillemot:finding-small-p:} shows that the PPM problem can be solved in time $2^{O(k^2\log k)}n$, i.e., linear-time in $n$ when $k$ is viewed as a constant. In particular, this implies that the PPM problem is \emph{fixed-parameter tractable (fpt)} with respect to the size of the pattern $k$. Work prior to this breakthrough result achieved runtimes of $O(n^{1+2k/3}\cdot\log n)$~\cite{albert:algorithms-for-:} and $O(n^{0.47k+o(k)})$~\cite{ahal:on-complexity-o:}. Using $\mathsf{run}(\tau)$ as parameter, i.e., the number of alternating runs of $\tau$, Bruner and Lackner~\cite{bruner:a-fast-algorith:} present an fpt algorithm with runtime $O(1.79^{\mathsf{run}(\tau)}\cdot kn)$.

A \emph{permutation class}, $\C$, is a set of permutations with the property that if $\tau \in \C$ and $\tau$ contains $\pi$ then $\pi \in \C$. In other words, $\C$ is closed downwards with respect to the partial ordering of permutations given by the relation ``is contained in''. A permutation class is proper if it does not contain every permutation. Permutation classes are frequently defined in terms of avoidance conditions, namely, for any set $B$ of permutations, the set $\Av(B)$ consisting of those permutations which avoid every element of $B$ is a permutation class (and it is always proper if $B$ is non-empty). Conversely, every permutation class is equal to the class of permutations avoiding its complement, or even avoiding the minimal elements (with respect to the partial ordering mentioned above) of its complement. For an overview of results on permutation classes, we refer to the corresponding chapter in the \textit{Handbook of Enumerative Combinatorics}~\cite{vatter:permutation-cla:}.

This leads to two natural ways in which one might restrict the PPM problem. One is to impose additional structure on the pattern, most naturally, to insist that the pattern belongs to a particular (proper) permutation class. One example which has been studied is the class of \textit{separable} permutations, which are those avoiding both $3142$ and $2413$. If the pattern is separable, PPM can be solved in polynomial time~\cite{bose:pattern-matchin:,ibarra:finding-pattern:,albert:algorithms-for-:,yugandhar:parallel-algori:,neou:pattern-matchin:}. The fastest algorithm for this case is by Ibarra~\cite{ibarra:finding-pattern:} with a runtime of $O(kn^4)$. Formally, we define this class of problems as follows:

\cprob
	{$\C$-\textsc{Pattern Permutation Pattern Matching} (\textsc{$\C$-pattern PPM})}
	{A text permutation $\tau$ of size $n$ and a pattern $\pi$ of size $k$, where $\pi$ belongs to a fixed proper permutation class $\C$.}
	{Does $\tau$ contain $\pi$?}

A second and more restrictive specialisation of the PPM problem is to insist that both the pattern and text belong to a (proper) permutation class. This is the version of the problem that we study. 

\cprob
	{\textsc{$\C$ Permutation Pattern Matching} ($\C$-PPM)}
	{A text permutation $\tau$ of size $n$ and a pattern $\pi$ of size $k$, both belonging to a fixed proper permutation class $\C$.}
	{Does $\tau$ contain $\pi$?}
	
Clearly, for a fixed $\C$, polynomial time algorithms for \textsc{$\C$-pattern PPM} apply to  \textsc{$\C$-PPM} as well. Consequently, the separable case, i.e., \textsc{$\Av(3142, 2413)$-PPM}, can be solved in $O(kn^4)$ time~\cite{ibarra:finding-pattern:}.
Note that if the pattern avoids $132$, $231$, $213$ or $312$ then it is automatically separable and thus the \textsc{$\C$-pattern PPM} problem for all four classes $\Av(132)$, $\Av(231)$, $\Av(213)$ or $\Av(312)$ can be solved in polynomial time.
 Most relevant to our work is a result by Guillemot and Vialette~\cite{guillemot:pattern-matchin:} that establishes an $O(k^2n^6)$-time algorithm for $\Av(321)$-PPM. In Sections~\ref{sec-lattice} and \ref{sec-fluid}, we improve their approach to give the following.

\begin{theorem}
\label{thm-complexity-321}
Given $321$-avoiding permutations $\tau$ of size $n$ and $\pi$ of size $k$, there is an $O(kn)$-time algorithm which determines whether $\tau$ contains $\pi$.
\end{theorem}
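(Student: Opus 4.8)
The plan is to exploit the well-known structure of $321$-avoiding permutations: a permutation avoids $321$ if and only if it is the union of two increasing subsequences. Concretely, given a $321$-avoiding $\sigma$, let its \emph{left-to-right maxima} form one increasing sequence and the remaining elements form another increasing sequence (that the remaining elements are increasing is exactly the $321$-avoidance condition). So both $\pi$ and $\tau$ decompose canonically as an interleaving of two increasing runs; I will think of each point of $\tau$ as being coloured `upper' (a left-to-right maximum) or `lower'. An embedding of $\pi$ into $\tau$ then amounts to choosing, for each point of $\pi$, whether it should be matched to an upper or a lower point of $\tau$, and then doing so in an order-preserving way. The key simplification, which I would establish first as a lemma, is that we may assume the embedding respects the canonical colouring of $\pi$: an upper point of $\pi$ may always be sent to an upper point of $\tau$ and a lower point of $\pi$ to a lower point of $\tau$ (if a lower point $x$ of $\pi$ were forced to an upper point of $\tau$, one can argue by an exchange argument that the upper point of $\pi$ sharing $x$'s "column position" behaves compatibly). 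This reduces the problem to a two-track matching problem.

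With that reduction in hand, the core is a greedy/dynamic-programming sweep. Scan $\tau$ from left to right, maintaining the length of the longest prefix of (the upper track of) $\pi$ matched so far and, simultaneously, the longest prefix of the lower track of $\pi$ matched so far — but these two counters interact, because the relative vertical order of the next unmatched upper point and next unmatched lower point of $\pi$ constrains which points of $\tau$ can be used. I would formalise the state as a pair $(a,b)$ recording how far we have progressed along each track, and show that among all reachable states after processing a given prefix of $\tau$, there is a single "dominant" one (componentwise maximal, after a suitable monotone adjustment); this is the crucial monotonicity claim that collapses the DP to a linear scan. At each point of $\tau$ we spend $O(1)$ work given $O(1)$-time access to $x^{\la}, x^{\ra}, x^{\ua}, x^{\da}$ in both $\pi$ and $\tau$, which can be precomputed in $O(n)$ and $O(k)$ time respectively; handling the interaction between the two tracks at a single $\tau$-point may require comparing against a pointer into $\pi$, giving the stated $O(kn)$ bound (in fact the argument as I have sketched it looks like $O(n+k)$, so presumably the genuine algorithm in the paper carries more state — e.g. it recomputes or re-scans a portion of $\pi$ at each step — and I would expect the $O(kn)$ to come from that).

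The main obstacle, and the step I expect to require the most care, is precisely the monotonicity/dominance claim that legitimises the greedy sweep: one must prove that greedily matching each point of $\tau$ as early as possible along the appropriate track never destroys a solution, \emph{despite} the coupling between the upper and lower tracks. The natural tool is an exchange argument — take an arbitrary embedding, and show it can be transformed, one point at a time from left to right, into the greedy one without ever increasing the index in $\tau$ used for any point of $\pi$ — but the bookkeeping is delicate because pushing an upper point of $\pi$ to an earlier $\tau$-point can collide with a lower point of $\pi$ that was matched to an intermediate $\tau$-point, and vice versa. I would isolate this as a standalone lemma about two interleaved increasing sequences, prove it by induction on the length of the $\tau$-prefix, and then the theorem follows by running the scan and comparing the final state against $(k_{\mathrm{upper}}, k_{\mathrm{lower}})$, the lengths of the two tracks of $\pi$. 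The remaining details — computing the canonical colourings, the neighbour pointers, and the precise update rule at each $\tau$-point — are routine.
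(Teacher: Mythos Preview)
Your central reduction lemma---that one may always assume an embedding sends left-to-right maxima of $\pi$ to left-to-right maxima of $\tau$---is false, and this is precisely the obstacle the paper spends an entire section on. The elements of $\pi$ that are left-to-right maxima but \emph{not} right-to-left non-minima (the paper calls these \emph{fluid} elements: they participate in no copy of $21$) are not forced to land on any particular track of $\tau$, and sometimes an embedding exists only if such an element is sent to a \emph{lower} point of $\tau$. Concretely, take $\pi = 2\,1\,3\,4\,5\,7\,6\,8$ and $\tau = 3\,1\,2\,4\,5\,9\,6\,7\,10\,8\,11\,13\,12$: the point $5$ of $\pi$ is a left-to-right maximum, yet every embedding of $\pi$ into $\tau$ sends it to a non-left-to-right-maximum of $\tau$ (sending it to the next available upper point, namely $9$, leaves no room for the final element $8$). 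Your exchange argument is stated for the wrong direction: lower points of $\pi$ do always go to lower points of $\tau$, but that is the easy direction; the failure is upper-to-upper, for fluid elements.

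This also undermines the ``single dominant state'' claim underpinning your sweep: after processing a prefix of $\tau$ there can be two incomparable reachable states, one having advanced further on the upper track and one on the lower, and neither dominates the other because a fluid element can legitimately consume from either. The paper's fix is to first treat the \emph{rigid} case (patterns with no fluid elements), where your intuition is essentially correct: the set of embeddings then forms a distributive lattice and a minimum embedding can be found by an iterative repair procedure in $O(kn)$ time. For general $\pi$ one writes $\pi = \pi_1 \oplus \cdots \oplus \pi_t$ as a direct sum of rigid blocks and singleton fluid elements, processes the blocks left to right using the rigid algorithm, and at each fluid singleton branches into at most two candidate partial embeddings (one sending it to the next upper point, one to the next lower point), pruning back to at most two after each step. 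The $O(kn)$ bound comes from the rigid subroutine; the fluid bookkeeping is constant time per block.
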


In Section~\ref{sec-skew-merged} we show how to adapt this approach to the class of skew-merged permutations, which are those permutations whose elements can be partitioned into an increasing subsequence and a decreasing subsequence. Skew-merged permutations can also be characterised as those permutations that avoid both $3412$ and $2143$~\cite{stankova:forbidden-subse:}.

\begin{theorem}
\label{thm-complexity-skew-merged}
Given skew-merged permutations $\tau$ of size $n$ and $\pi$ of size $k$, there is an $O(kn)$-time algorithm which determines whether $\tau$ contains $\pi$.
\end{theorem}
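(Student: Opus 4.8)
The plan is to imitate the proof of Theorem~\ref{thm-complexity-321}, exploiting the structural parallel between the two classes: a $321$-avoiding permutation is a union of two increasing subsequences, whereas a skew-merged permutation is, by definition, a union of an increasing subsequence and a decreasing one. First I would fix, once and for all, a decomposition $\pi = I_\pi \sqcup D_\pi$ and $\tau = I_\tau \sqcup D_\tau$ of each permutation into an increasing part and a decreasing part. Using the characterisation skew-merged $= \Av(2143,3412)$ together with the known structural description of this class, such a decomposition can be chosen canonically and read off in linear time, and the permutation then sits inside a bounded grid of monotone cells glued in an ``X'' (or degenerate ``$+$'') shape, with the two arms crossing in a bounded central region.

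The first reduction is to \emph{track-respecting} embeddings. Since $I_\tau$ is increasing, every decreasing subsequence of $\tau$ meets $I_\tau$ in at most one point, and symmetrically every increasing subsequence of $\tau$ meets $D_\tau$ in at most one point. Hence any embedding $f\colon\pi\hookrightarrow\tau$ sends all but at most one point of $D_\pi$ into $D_\tau$ and all but at most one point of $I_\pi$ into $I_\tau$. The rigidity of the X-shape forces the at most two exceptional points to be drawn from a bounded set of ``extreme'' points of $\pi$ (the ends of its arms), mapping to a bounded set of extreme points of $\tau$; enumerating these $O(1)$ possibilities reduces the problem to deciding, for each fixed choice, whether there is an embedding with $f(I_\pi)\subseteq I_\tau$ and $f(D_\pi)\subseteq D_\tau$ and prescribed behaviour on the exceptional points, together with a few genuinely degenerate cases (text or pattern essentially increasing or essentially decreasing).

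For a track-respecting instance I would run the lattice/fluid sweep of Sections~\ref{sec-lattice}--\ref{sec-fluid} across the positions of $\tau$ from left to right, now maintaining a canonical partial embedding described by \emph{two} frontiers: how far $I_\pi$ has been matched into $I_\tau$, and how far $D_\pi$ has been matched into $D_\tau$. The lattice-theoretic fact that a leftmost partial embedding exists and can be extended greedily should carry over; the notion of a fluid position must be adapted to the fact that, as the sweep advances, the values still available on the increasing track rise while those on the decreasing track fall. With the canonical decompositions in hand so that the central interleaving of the two arms is bounded, each step costs $O(k)$ in the worst case and $O(1)$ amortised, for a total of $O(kn)$ over the $O(1)$ subproblems.

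The main obstacle is the coupling of the two tracks. In the $321$-avoiding case the two tracks are both increasing and interact gently, whereas here the increasing and decreasing arms of $\pi$ interleave around the centre of the X, so that a point assigned to one track of $\tau$ constrains, through its left/right and above/below relation, the points assigned to the other track. Pushing the exchange argument through therefore requires (i) choosing the canonical increasing/decreasing decompositions so that this interleaving is of bounded size, and (ii) checking that the ``leftmost partial embedding'' remains well defined and monotone under the sweep even though the two frontiers advance in opposite directions; verifying the $O(1)$ exceptional placements and the degenerate shapes is then routine.
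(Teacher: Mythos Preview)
Your sketch shares the right instincts with the paper---a lattice of partial embeddings, a greedy drive toward an extremal one, and a separate treatment of the ``flexible'' elements---but the concrete route diverges, and the two steps you flag as obstacles are exactly the ones you have not solved.

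The paper does \emph{not} use a two-track $I\sqcup D$ decomposition or a left-to-right sweep. It refines the picture to five regions: four corner types NE, NW, SW, SE (an element is of type NE if it is the $3$ of some $213$, etc.) and a \emph{centre}, which is always a single monotone run (Proposition~\ref{prop-skew-merged-structure}). Corner types are provably preserved by every embedding, so these play the role of rigid elements. The order that makes the lattice work is not left-to-right but \emph{distance from the centre}: for $x,y$ of the same corner type, $x\lhd y$ means $x$ lies farther out. Under this order the meet of two embeddings of the non-central part is again an embedding, and the problem-set/update mechanism of Algorithm~\ref{alg-rigid-case} carries over verbatim to find the outermost embedding of the four corners in $O(kn)$ time. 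The central monotone block of $\pi$ is then placed, after the corners are fixed, into whatever of $\tau$ remains; this reduces to a longest monotone subsequence in a skew-merged permutation and costs $O(n)$.

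Your left-to-right sweep cannot play the same role: on the increasing track ``leftmost'' coincides with ``lowest'', but on the decreasing track ``leftmost'' coincides with ``highest'', so there is no single direction in which both frontiers move monotonically, and the exchange argument underlying a canonical minimum embedding does not go through as in Section~\ref{sec-lattice}. The paper's centre-outwards order is precisely the device that dissolves this coupling---all four corners move monotonically toward the centre under updates---and you have not proposed an alternative.

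Your reduction to $O(1)$ ``exceptional'' placements is also unjustified. The elements of $\pi$ whose track is not forced are exactly the central ones (corner elements must go to the matching corner of $\tau$), and the centre of $\pi$ can be an arbitrarily long monotone run; an embedding may send any one of those central elements to the ``wrong'' track of $\tau$, not just an end of an arm. The paper avoids enumerating exceptions altogether by handling the whole central block in one shot after the corners are pinned down.
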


The following elementary observation will be used repeatedly.

\begin{lemma}
\label{obs-check-neighbors}
Let $\pi$ and $\tau$ be permutations and $f: \pi\to \tau$. Then $f$ is an embedding of $\pi$ into $\tau$ if and only if for every element $x$ of $\pi$:\begin{itemize}[noitemsep, topsep=-10pt]
\item if $x^{\la} \neq \bot$ then $f(x)$ lies strictly to the right of $f(x^{\la})$ and
\item if $x^{\da} \neq \bot$ then $f(x)$ lies strictly above $f(x^{\da})$.
\end{itemize}
\end{lemma}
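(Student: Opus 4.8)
The plan is to prove the two directions separately, with the backward direction carrying the (small) amount of content.

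For the forward direction, suppose $f$ is an embedding. If $x^{\la} \neq \bot$, then $x^{\la}$ lies strictly to the left of $x$ in $\pi$, so since $f$ preserves relative order, $f(x^{\la})$ lies strictly to the left of $f(x)$; that is, $f(x)$ lies strictly to the right of $f(x^{\la})$. Applying the same argument to $x^{\da}$ gives that $f(x)$ lies strictly above $f(x^{\da})$, which is all that is claimed.

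For the backward direction, suppose the two bulleted conditions hold for every element $x$ of $\pi$. First I would upgrade these ``adjacent'' conditions to a statement about \emph{all} pairs. Fix elements $x$ and $y$ of $\pi$ with $x$ strictly to the left of $y$, and consider the sequence $y,\ y^{\la},\ (y^{\la})^{\la},\ \dots$ obtained by repeatedly passing to the left neighbour. Because $\pi$ is a permutation, this sequence lists precisely the elements occupying the positions from that of $y$ down to $1$, so it reaches $x$ after finitely many steps. By the first bulleted condition each step strictly decreases the horizontal position of the image under $f$, and chaining these inequalities along the finite sequence from $y$ back to $x$ yields that $f(x)$ lies strictly to the left of $f(y)$. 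An identical argument using the chain of downward neighbours and the second bulleted condition shows that if $x$ lies strictly below $y$, then $f(x)$ lies strictly below $f(y)$.

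Finally I would assemble these facts. For any two distinct elements $x$ and $y$ of $\pi$, exactly one lies to the left of the other and exactly one lies below the other, since $\pi$ is a permutation; by the previous step $f$ respects both comparisons, so $f(x)$ and $f(y)$ are in the same relative order as $x$ and $y$. In particular $f(x) \neq f(y)$, so $f$ is injective, and hence $f$ is an embedding. I do not expect a genuine obstacle here: the only point needing care is that in a permutation there are no ties in either coordinate, which is exactly what lets the ``strictly'' in the hypotheses propagate along the neighbour chains and what reduces ``same relative order'' to the conjunction of a single horizontal and a single vertical comparison.
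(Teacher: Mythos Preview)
Your proof is correct and follows essentially the same approach as the paper: both directions are handled as you do, with the backward direction proved by chaining the left-neighbour (respectively, down-neighbour) steps to show that the adjacent conditions imply the general order-preservation. Your write-up is somewhat more explicit than the paper's (you spell out injectivity and the assembly step), but the argument is the same.
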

\begin{proof}
Suppose that $x$ and $y$ are points of $\pi$ and that, without loss of generality, $y$ lies strictly to the left of $x$. Then $y$ occurs in the sequence $x^{\la},x^{\la\la}, x^{\la\la\la} ,\dots$. So, by inductive use of the first property, $f(x)$ lies strictly to the right of $f(y)$. Similarly, inductive use of the second property establishes that the vertical relationship between $f(x)$ and $f(y)$ is the same as that between $x$ and $y$, and the result follows.

The other direction follows directly from the definition of embeddings given on page~\pageref{def-embedding}: $x^{\la}$ is an element strictly to the left of $x$ and thus $f(x)$ lies strictly to the right of $f(x^{\la})$ for an embedding $f$. In the same way, $x^{\da}$ is an element strictly below $x$ and thus $f(x)$ lies strictly above $f(x^{\da})$.
\end{proof}

\section{The Lattice of Rigid Embeddings of $321$-Avoiding Permutations}
\label{sec-lattice}

It is easy to see that the elements of any $321$-avoiding permutation $\pi$ can be partitioned into two increasing subsequences. This partition is in general not unique but in any such partition, one of these subsequences will contain all those elements which participate as the `$2$' in a copy of $21$---called the \emph{upper elements} of $\pi$ and denoted $U_\pi$--- and the other will contain all those elements which participate as the `$1$' in a copy of $21$---called the \emph{lower elements} of $\pi$ and denoted $L_\pi$. 
Elements that are neither upper nor lower elements, i.e., those that are not involved in a copy of $21$, can be part of either of the two subsequences.
Let us formalise these definitions: An element $x$ of $\pi$ is an upper element if there is some embedding of $21 = \{(1,2),(2,1)\}$ into $\pi$ such that $x$ is the image of $(1,2)$ and a lower element if there is an embedding of $21$ such that $x$ is the image of $(2,1)$.

Following Albert, Atkinson, Brignall, Ru\v{s}kuc, Smith, and West~\cite{albert:growth-rates-fo:}, elements which are either upper or lower elements of $\pi$ are referred to as \emph{rigid} elements, and $\pi$ is called a \emph{rigid permutation} if all of its elements are rigid (i.e., if $\pi = U_\pi \cup L_\pi$). The remaining elements will be called \emph{fluid} elements.
For an example of a $321$-avoiding permutation and its decomposition into rigid and fluid elements, see Figure~\ref{fig-321}.

Note that it can be determined in linear time which elements are upper, lower and fluid in a permutation. For this purpose one simply needs to scan the permutation from left to right and record the largest element encountered so far, denoted by $\ell$, and the smallest element yet to come at the right, denoted by $s$. When we read an element $x$, three cases can occur:
\begin{itemize}[noitemsep, nolistsep]
\item $x > s$: In this case $x \, s$ forms a $21$-pattern and thus $x$ is an upper element.
\item $x < \ell$: In this case $\ell \, x$ forms a $21$-pattern and thus $x$ is a lower element
\item $x \leq s$ and $x \geq \ell$ (which implies that $x=s$ and $s>\ell$): In this case $x$ does not occur in a $21$-pattern and is thus a fluid element.
\end{itemize}

\begin{figure}
\begin{center}
\usetikzlibrary{shapes, decorations.pathreplacing}
\begin{tikzpicture}[scale=0.3]
	\tikzstyle{u}=[rectangle, draw, fill=black, inner sep=0.0675cm]
	\tikzstyle{l}=[circle, draw, fill=black, inner sep=0.0525cm]
	\tikzstyle{f}=[star, fill=black, inner sep=0.045cm]
	\foreach \i in {1,...,13} {
		\draw [darkgray] (\i,0.5) -- (\i,13.5);
	}
	\foreach \i in {1,...,13} {
		\draw [darkgray] (0.5,\i) -- (13.5,\i);
	}

	\node[u] at (1,3) {};
	\node[l] at (2,1) {};
	\node[l] at (3,2) {};
	\node[f] at (4,4) {};
	\node[f] at (5,5) {};
	\node[u] at (6,9) {};
	\node[l] at (7,6) {};
	\node[l] at (8,7) {};
	\node[u] at (9,10) {};
	\node[l] at (10,8) {};
	\node[f] at (11,11) {};
	\node[u] at (12,13) {};
	\node[l] at (13,12) {};
	
	\draw[draw=black, very thick] (0.5,0.5) rectangle (3.5,3.5);
	\draw[draw=black, very thick] (3.5,3.5) rectangle (4.5,4.5);
	\draw[draw=black, very thick] (4.5,4.5) rectangle (5.5,5.5);
	\draw[draw=black, very thick] (5.5,5.5) rectangle (10.5,10.5);
	\draw[draw=black, very thick] (10.5,10.5) rectangle (11.5,11.5);
	\draw[draw=black, very thick] (11.5,11.5) rectangle (13.5,13.5);
	
	\draw (16,1) rectangle (25.75,5);
	\node[u] at (17,4)  {};
	\node[l] at (17,3) {};
	\node[f] at (17,2) {};
	\node at (18,4) [right] {upper};
	\node at (18,3) [right]  {lower};
	\node at (18,2) [right] {fluid};
	\draw [decorate,decoration={brace,amplitude=5pt,mirror,raise=4pt},yshift=0pt](21.25,2.5) -- (21.25,4.5);
	\node at (22.25,3.5) [right] {rigid};
\end{tikzpicture}
\end{center}
\caption{The decomposition of the $321$-avoiding permutation $\pi = 3 \, 1 \, 2 \, 4 \, 5 \, 9 \, 6 \, 7 \, 10 \, 8 \, 11 \, 13 \, 12$ into rigid and fluid elements.}
\label{fig-321}
\end{figure}
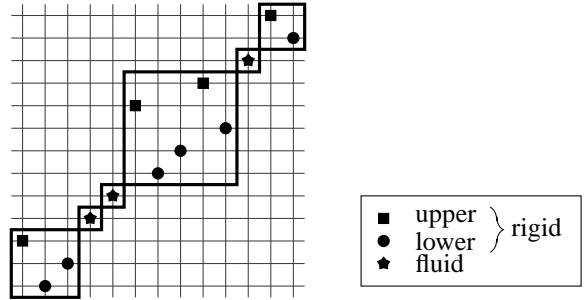

The existence of fluid elements in a pattern will be the source of some difficulty in solving the $\Av(321)$-PPM problem, and will be addressed in the next section. For the remainder of this section we consider a rigid pattern $\pi$ of size $k$ and a $321$-avoiding text $\tau$ of size $n$. Since an embedding preserves relative locations of points, the image of any rigid element must be rigid. More precisely, we have the following:

\begin{observation}
Let $\pi$ be a rigid pattern and $\tau$ be an arbitrary $321$-avoiding permutation.
If there exists an embedding of $\pi$ into $\tau$, then it must map upper (resp., lower) elements of $\pi$ to upper (resp., lower) elements of $\tau$ and the fluid elements of $\tau$ will never occur in an embedding.
\end{observation}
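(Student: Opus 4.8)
The plan is to read everything off the order-theoretic meaning of ``upper'', ``lower'' and ``fluid'' together with the defining property of an embedding. Recall that $x$ is an upper element of $\pi$ exactly when some element $y$ of $\pi$ lies strictly below and strictly to the right of $x$ (so that $x$ plays the role of the `$2$' in a copy of $21$), and symmetrically $x$ is a lower element exactly when some element of $\pi$ lies strictly above and strictly to the left of $x$. An element is rigid iff it is upper or lower, and fluid iff it is neither.

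First I would handle the upper elements. Let $f$ be an embedding of $\pi$ into $\tau$ and let $x$ be an upper element of $\pi$, witnessed by an element $y$ below and to the right of $x$. Since $f$ preserves the relative position of every pair of points, $f(y)$ lies strictly below and strictly to the right of $f(x)$; hence $f(x)$ together with $f(y)$ is a copy of $21$ in $\tau$ in which $f(x)$ is the `$2$', so $f(x)$ is an upper element of $\tau$. The case of a lower element $x$ is the mirror image: a witness above and to the left of $x$ maps to an element above and to the left of $f(x)$, so $f(x)$ is a lower element of $\tau$. This gives the first assertion.

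The second assertion is then immediate from rigidity of $\pi$: every element of $\pi$ is upper or lower, so by the first assertion its $f$-image is upper or lower in $\tau$, that is, a rigid element of $\tau$; hence no fluid element of $\tau$ lies in the image of $f$. I would also remark in passing that in a $321$-avoiding permutation no element is simultaneously upper and lower --- combining a witness below-and-right with a witness above-and-left, together with the element itself, would produce a copy of $321$ --- so the partition of the rigid elements into upper and lower is unambiguous, which is what makes the ``resp.''\ in the statement meaningful.

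There is no real obstacle here: the observation is a direct consequence of the fact, already exploited in Lemma~\ref{obs-check-neighbors}, that an embedding preserves the relative order of every pair of points, together with the characterisations of upper, lower and fluid elements recalled above. The only point that requires any care is to state the witnesses for ``upper'' and ``lower'' with the correct orientation, so that pushing them forward through $f$ yields witnesses of the same type.
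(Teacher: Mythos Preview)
Your proof is correct and follows exactly the reasoning the paper indicates in the sentence preceding the observation (``Since an embedding preserves relative locations of points, the image of any rigid element must be rigid''); the paper treats this as a self-evident observation without a formal proof, and your argument simply spells out the details.
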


In order to look for such embeddings we must widen our search space. A map $f:\pi \to \tau$ is called a \emph{rigid mapping} if $f$ maps upper (resp., lower) elements of $\pi$ to upper (resp., lower) elements of $\tau$. As noted above, because $\pi$ is rigid, every embedding of $\pi$ into $\tau$ is a rigid mapping, but the converse is far from true since, among other reasons, rigid mappings need not be injective.

Given two points, $x$ and $y$, in $U_\pi$, we say $x \leq y$ if $y$ lies above and to the right of $x$. This is a linear order on $U_\pi$, and we have similar linear orders (all denoted $\leq$) on $L_\pi$, $U_\tau$ and $L_\tau$. This makes the set of all rigid mappings of $\pi$ into $\tau$ into a partially ordered set using point-wise comparison; that is, given rigid mappings $f,g \st \pi\rightarrow\tau$, we write $f\le g$ if $f(x) \leq g(x)$ for all elements $x$ of $\pi$. In fact, it is easy to see that this partially ordered set is a distributive lattice; given two rigid mappings $f,g : \pi \to \tau$ their \emph{meet} and \emph{join} can be defined, respectively, by
\begin{eqnarray*}
	(f\wedge g)(x)
	&=&
	\min\{f(x),g(x)\},\\
	(f\vee g)(x)
	&=&
	\max\{f(x),g(x)\}
\end{eqnarray*}
for all elements $x$ of $\pi$.

%

It is notable that these observations also hold for embeddings.
That is, the set of embeddings from $\pi$ into $\tau$ is a sublattice of the lattice of rigid mappings:
\begin{theorem}bruner
[{\cite[Theorem 2]{albert:growth-rates-fo:}}]
\label{thm-321-lattice}
Given a rigid pattern $\pi$ and a $321$-avoiding text $\tau$, the set of embeddings of $\pi$ into $\tau$ forms a distributive lattice under the operations of meet and join defined above.
\end{theorem}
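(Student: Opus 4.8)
The plan is to prove the stronger statement that the set of embeddings of $\pi$ into $\tau$ is closed under the pointwise operations $(f\wedge g)(x)=\min\{f(x),g(x)\}$ and $(f\vee g)(x)=\max\{f(x),g(x)\}$ — that is, that it forms a \emph{sublattice} of the lattice of all rigid mappings. Because $\pi$ is rigid, the pointwise minimum and maximum of two rigid mappings are again rigid mappings, so the only thing needing verification is that $f\wedge g$ and $f\vee g$ are \emph{embeddings} whenever $f$ and $g$ are; granting this, the set of embeddings is a sublattice of a distributive lattice and hence is itself a distributive lattice. (We assume throughout that at least one embedding exists, so that the poset in question is nonempty.) To check that $f\wedge g$ and $f\vee g$ are embeddings I would appeal to Lemma~\ref{obs-check-neighbors}: it suffices to show, for each element $x$ of $\pi$, that the image of $x$ lies strictly to the right of the image of $x^{\la}$ and strictly above the image of $x^{\da}$ whenever these neighbours are defined.

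The key preliminary observation is that, because $\tau$ avoids $321$, each of $U_\tau$ and $L_\tau$ is an increasing sequence, so on each of them the order $\leq$ (``above and to the right of'') is not merely linear but monotone with respect to position: if $a\leq b$ then $a$ lies weakly to the left of and weakly below $b$. Moreover, since $\pi$ is rigid, every element $x$ of $\pi$, as well as each of its neighbours $x^{\la}$ and $x^{\da}$, is upper or lower, so $f(x)$ and $g(x)$ lie in the same one of $U_\tau,L_\tau$ and the minimum (respectively maximum) above is taken inside that single linear order. What one cannot assume is that $x$ and, say, $x^{\la}$ are of the \emph{same} type, and this is the only mild subtlety in the argument.

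With these facts in hand the neighbour checks are immediate. For the meet, fix $x$ with $x^{\la}\neq\bot$ and, after possibly swapping $f$ and $g$, assume $(f\wedge g)(x)=f(x)$; then $(f\wedge g)(x^{\la})\leq f(x^{\la})$ in the relevant one of the two orders, so $(f\wedge g)(x^{\la})$ lies weakly to the left of $f(x^{\la})$, and since $f$ is an embedding $f(x)$ lies strictly to the right of $f(x^{\la})$, whence $(f\wedge g)(x)$ lies strictly to the right of $(f\wedge g)(x^{\la})$; the check against $x^{\da}$ is identical with ``above'' in place of ``to the right of.'' For the join, fix $x$ with $x^{\la}\neq\bot$ and assume $(f\vee g)(x^{\la})=f(x^{\la})$; then $(f\vee g)(x)\geq f(x)$ lies weakly to the right of $f(x)$, which lies strictly to the right of $f(x^{\la})=(f\vee g)(x^{\la})$, and again the vertical case is symmetric. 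I do not anticipate any genuine obstacle here: the proof is essentially an exercise in combining Lemma~\ref{obs-check-neighbors} with the positional monotonicity of the orders $\leq$ on $U_\tau$ and $L_\tau$, which is precisely the place where $321$-avoidance of $\tau$ is used; the one point requiring a little care is that a point and its left (or lower) neighbour may be of different types, so one must argue via this monotonicity rather than by fixing a single comparison of $f$ with $g$.
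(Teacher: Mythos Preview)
Your argument is correct and matches the sublattice strategy the paper announces in the sentence immediately preceding the theorem. Note, however, that the paper does not actually prove Theorem~\ref{thm-321-lattice}: it is imported from \cite{albert:growth-rates-fo:}, so there is no in-paper proof to compare against in detail. Your use of Lemma~\ref{obs-check-neighbors} to reduce the embedding check to the two neighbour conditions, together with the positional monotonicity of $\leq$ on each of $U_\tau$ and $L_\tau$, is clean and absorbs the cross-type case (where $x$ and $x^{\la}$ have different types) without any case split. By way of contrast, when the paper does write out the analogous closure-under-meet argument in the skew-merged setting (Section~\ref{sec-skew-merged}), it proceeds by a direct case analysis on the types of the two points rather than via neighbour checks; your route is the more economical of the two.
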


It follows from Theorem~\ref{thm-321-lattice} that if $\pi$ is contained in $\tau$ then there is a \emph{minimum embedding} of $\pi$ into $\tau$ which we denote by $e_{\min}$.

Given an element $x$ of some $321$-avoiding permutation $\sigma$ we define $x^{\la}_U$ to be the rightmost element of $U_\sigma$ that is to the left of $x$. Of course, we have corresponding notations such as $x^{\da}_L$, $x^{\ra}_U$ and so on. In all cases, if no such element exists we get $\bot$ as usual. We also define the \emph{type} of $x$, $\type{x}$ to be $U$ if $x \in U_\sigma$ and $L$ if $x \in L_\sigma$. The following result forms the core of our algorithm for determining whether there is an embedding of $\pi$ into $\tau$, at least for the case where $\pi$ is rigid.
It will allow us to turn an arbitrary rigid mapping into an embedding, if possible.

\begin{proposition}
\label{prop-allowed-update}
Suppose that $e : \pi \to \tau$ is an embedding, $f : \pi \to \tau$ is a rigid mapping, and, for all $x \in \pi$, $f(x) \leq e(x)$. Then, for all $x \in \pi$:
\[
\max \{ f(x^{\la})^{\ra}_{\type{x}}, f(x^{\da})^{\ua}_{\type{x}} \} \leq e(x),
\]
where we define $\max\{y,\bot\}=y$ and $\max\{\bot\}=0$.
\end{proposition}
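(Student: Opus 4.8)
The plan is to establish the two bounds $f(x^{\la})^{\ra}_{\type{x}} \le e(x)$ and $f(x^{\da})^{\ua}_{\type{x}} \le e(x)$ separately; the displayed inequality then follows at once from the definition of $\max$. The degenerate cases are harmless: if $x^{\la}=\bot$ then $f(x^{\la})^{\ra}_{\type x}=\bot$, and if this element equals $\bot$ for any other reason it again contributes nothing, so it suffices to treat the case in which $a:=f(x^{\la})$ and $b:=a^{\ra}_{\type x}$ are both defined (and symmetrically for the second term, where we set $a':=f(x^{\da})$ and $b':=(a')^{\ua}_{\type x}$).

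The one structural fact I will use is that in every $321$-avoiding permutation $\sigma$ the set $U_\sigma$ is an increasing subsequence, and likewise $L_\sigma$: a left-to-right descent within $U_\sigma$, together with a point witnessing upper-ness of its right endpoint, would form a copy of $321$. Hence, for two elements of the same type, the order $\le$ is the same as ``lies weakly to the left of'' and the same as ``lies weakly below''. This lets me pass freely between the point-wise order on rigid mappings and spatial position, which matters because $f(x^{\la})$ need not have the same type as $x$.

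For the horizontal bound: since $f$ and $e$ are both rigid mappings, $a=f(x^{\la})$ and $e(x^{\la})$ both have type $\type{x^{\la}}$, so the hypothesis $f(x^{\la})\le e(x^{\la})$ says that $a$ lies weakly to the left of $e(x^{\la})$; and by Lemma~\ref{obs-check-neighbors} applied to the embedding $e$, the element $e(x^{\la})$ lies strictly to the left of $e(x)$. Thus $a$ lies strictly to the left of $e(x)$. Because $\pi$ is rigid, $e(x)$ has type $\type{x}$, so $e(x)$ is one of the type-$\type{x}$ elements lying strictly to the right of $a$; as $b=a^{\ra}_{\type x}$ is by definition the leftmost such element, $b$ lies weakly to the left of $e(x)$, and since $b$ and $e(x)$ share the type $\type{x}$ this is exactly $b\le e(x)$.

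The vertical bound is the transpose of this argument: the hypothesis gives $a'$ weakly below $e(x^{\da})$, Lemma~\ref{obs-check-neighbors} gives $e(x^{\da})$ strictly below $e(x)$, hence $a'$ lies strictly below $e(x)$; since $e(x)$ has type $\type{x}$ and lies above $a'$, the lowest type-$\type{x}$ element above $a'$, namely $b'$, lies weakly below $e(x)$, so $b'\le e(x)$. Combining the two bounds completes the proof. I do not expect a genuine obstacle here; the only point requiring care is the type bookkeeping, namely that a comparison $f(z)\le e(z)$ lives in the linear order of $\type{z}$ while the quantities $a^{\ra}_{\type x}$ and $(a')^{\ua}_{\type x}$ live in the linear order of $\type{x}$, so the translation between them must go through spatial position --- and the ``$U_\sigma,L_\sigma$ are increasing'' remark is exactly what makes that translation legitimate.
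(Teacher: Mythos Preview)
Your proof is correct and follows essentially the same approach as the paper's. The paper argues via the intermediate inequality $e(x^{\la})^{\ra}_{\type{x}} \le e(x)$ and then uses monotonicity of $(\cdot)^{\ra}_{\type{x}}$ to pass from $f(x^{\la})\le e(x^{\la})$ to $f(x^{\la})^{\ra}_{\type{x}} \le e(x^{\la})^{\ra}_{\type{x}}$, whereas you go directly by observing that $e(x)$ itself is a type-$\type{x}$ element strictly to the right of $a=f(x^{\la})$; these are the same idea organised slightly differently, and your more explicit handling of the type bookkeeping is a welcome clarification rather than a departure.
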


\begin{proof}
We first establish that $f(x^{\la})^{\ra}_{\type{x}} \leq e(x)$. Since $x$ lies strictly to the right of $x^{\la}$ (and $e$ is an embedding), $e(x)$ lies strictly to the right of  $e(x^{\la})$ and is of the same type as $x$, so, it does not lie to the left of $e(x^{\la})^{\ra}_{\type{x}}$. Consequently, $e(x^{\la})^{\ra}_{\type{x}} \leq e(x)$. But $f(x^{\la}) \leq e(x^{\la})$ and so $ f(x^{\la})^{\ra}_{\type{x}} \leq e(x^{\la})^{\ra}_{\type{x}} \leq e(x)$. The arguments for the other case are exactly the same.
\end{proof}

Applying the proposition above in the case where $f = e$, we see that for any embedding, $e$, from a rigid $\pi$ into $\tau$, and any $x \in \pi$:
\[
\max \{ e(x^{\la})^{\ra}_{\type{x}}, e(x^{\da})^{\ua}_{\type{x}} \} \leq e(x).
\]
Now suppose that $f$ is any rigid mapping from $\pi$ to $\tau$. We say that $x$ \emph{is a problem} if it violates the above condition, i.e., $x$ is a problem if 
\begin{equation}\label{eqn:problem-condition}
f(x) < \max \{ f(x^{\la})^{\ra}_{\type{x}}, f(x^{\da})^{\ua}_{\type{x}} \}.
\end{equation}
Intuitively , $x$ is a problem if $f(x)$ is too low compared with $f(x^{\da})$ or too far left compared with $f(x^{\la})$.
We let $P(f)$ be the set of problems for $f$, for which the following holds:

\begin{corollary}
Let $\pi$ be a rigid permutation and $\tau$ a $321$-avoiding permutation.
A rigid mapping $f$ is an embedding of $\pi$ into $\tau$ if and only if the set of problems $P(f)$ is empty.
\end{corollary}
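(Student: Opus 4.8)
\emph{Proof proposal.} The statement splits into the two implications, and the forward one is essentially the displayed inequality that precedes the corollary, read in reverse. So first suppose that $f$ is an embedding of $\pi$ into $\tau$. Then $f$ is in particular a rigid mapping (by the Observation: an embedding sends upper elements to upper elements and lower elements to lower elements), so I can apply Proposition~\ref{prop-allowed-update} with $e=f$. The hypothesis ``$f(x)\le e(x)$ for all $x$'' then holds trivially, with equality, and the conclusion becomes $\max\{f(x^{\la})^{\ra}_{\type{x}},f(x^{\da})^{\ua}_{\type{x}}\}\le f(x)$ for every $x\in\pi$. This is exactly the negation of~\eqref{eqn:problem-condition}, so no element of $\pi$ is a problem and $P(f)=\emptyset$.

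For the converse, assume $P(f)=\emptyset$. By Lemma~\ref{obs-check-neighbors} it is enough to verify that for every $x\in\pi$ we have $f(x)$ strictly to the right of $f(x^{\la})$ whenever $x^{\la}\neq\bot$, and $f(x)$ strictly above $f(x^{\da})$ whenever $x^{\da}\neq\bot$ (Lemma~\ref{obs-check-neighbors} being an ``if and only if'', these two conditions are equivalent to $f$ being an embedding, so no separate injectivity argument is needed). I would establish the horizontal relation and obtain the vertical one by the symmetric argument, replacing $\ra$ by $\ua$ throughout. Fix $x$ with $x^{\la}\neq\bot$. Since $x$ is not a problem, $f(x)\ge f(x^{\la})^{\ra}_{\type{x}}$, where $\ge$ is read in the linear order on $U_\tau$ or on $L_\tau$ according to $\type{x}$. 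If $f(x^{\la})^{\ra}_{\type{x}}\neq\bot$ we are done at once: by definition $f(x^{\la})^{\ra}_{\type{x}}$ is an element of $\tau$ of type $\type{x}$ lying strictly to the right of $f(x^{\la})$, and since $f(x)$ is also of type $\type{x}$ and is no smaller in the relevant linear order, $f(x)$ lies weakly to the right of $f(x^{\la})^{\ra}_{\type{x}}$, hence strictly to the right of $f(x^{\la})$.

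The only point that then remains, and the one I expect to be delicate, is the boundary case $f(x^{\la})^{\ra}_{\type{x}}=\bot$ (and, symmetrically, $f(x^{\da})^{\ua}_{\type{x}}=\bot$) with $x^{\la}\neq\bot$: here the ``problem'' inequality is vacuous, yet we still need $f(x)$ strictly to the right of $f(x^{\la})$. One must argue that a rigid mapping which sends $x^{\la}$ to, or past, the final element of the relevant increasing run of $\tau$ cannot be consistent with $P(f)=\emptyset$; resolving this amounts to disentangling the ``undefined'' conventions built into~\eqref{eqn:problem-condition} from the literal right-of/above requirements of Lemma~\ref{obs-check-neighbors}, and I would handle it by a short case analysis on the types of $x$ and of its left and lower neighbours, using that $\pi$ is rigid and that images of rigid elements again lie in $U_\tau\cup L_\tau$. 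Once that case is dealt with, everything else is a direct transcription through Proposition~\ref{prop-allowed-update} and Lemma~\ref{obs-check-neighbors}.
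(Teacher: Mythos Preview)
Your argument is essentially the paper's, with the converse run directly rather than by contrapositive: both proofs push the forward direction through Proposition~\ref{prop-allowed-update} with $e=f$, and both reduce the converse to Lemma~\ref{obs-check-neighbors}. Where you show that $P(f)=\emptyset$ forces $f(x)$ strictly right of $f(x^{\la})$ (and symmetrically, above $f(x^{\da})$), the paper instead assumes $f$ fails one of these and exhibits a problem; the two are the same reasoning read in opposite directions.

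The one substantive difference is that you explicitly isolate the boundary case $x^{\la}\neq\bot$ but $f(x^{\la})^{\ra}_{\type{x}}=\bot$ (and its vertical analogue), whereas the paper's proof silently assumes this operator is defined: it writes ``if $f(x)$ is left of or equal to $f(x^{\la})$, we have that $f(x)$ is strictly left of $f(x^{\la})^{\ra}_{\type{x}}$'' without checking existence. So your flagged ``delicate'' case is not an extra obstacle you have created; it is a point the paper elides. Under the paper's reading of the $\bot$ conventions (where an undefined right/upper neighbour effectively forces the problem condition), the case dissolves, and your sketch of a short type-based case analysis is an acceptable way to close it; but you have not written that analysis out, so strictly speaking your proof is a proposal with one acknowledged loose end---which is exactly the status of the paper's own proof at the same spot.
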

\begin{proof}
If $f$ is an embedding, it follows from Proposition~\ref{prop-allowed-update} that no element $x\in\pi$ fulfills condition~\eqref{eqn:problem-condition}. Thus $P(f)$ is empty.

For the other direction, assume that $f$ is not an embedding. From Lemma~\ref{obs-check-neighbors} we know that there exists an $x\in\pi$ such that $f(x)$ is below or equal to $f(x^{\da})$ or such that $f(x)$ is left of or equal to $f(x^{\la})$. 
First, if $f(x)$ is below or equal to $f(x^{\da})$ this implies that $f(x)$ is strictly below $f(x^{\da})^{\ua}_{\type{x}}$ and hence $x\in P(f)$.
Second, if $f(x)$ is left of or equal to $f(x^{\la})$, we have that $f(x)$ is strictly left of $f(x^{\la})^{\ra}_{\type{x}}$.
Now note that for $f(x), f(y)\in \tau$ of the same type, $f(x)$ is left of $f(y)$ if and only if $f(x)$ is below $f(y)$.
Moreover we know that $f$ preserves types and thus $f(x)$ and $f(x^{\la})^{\ra}_{\type{x}}$ have the same type.
We conclude that $f(x)<f(x^{\la})^{\ra}_{\type{x}}$ and thus $x\in P(f)$.
\end{proof}

We now describe an algorithm, displayed as Algorithm \ref{alg-rigid-case}. Given as input a rigid permutation $\pi$ and a $321$-avoiding permutation $\tau$, it returns the minimum embedding $e_{\min}$  of $\pi$ into $\tau$ when it exists, and fails otherwise. The algorithm constructs and updates a rigid mapping $f$, ensuring that $f \leq e_{\min}$ at all times (if an embedding exists).
Let $f_0$ be the map that sends all the elements of $U_{\pi}$ to the least element of $U_{\tau}$ and all elements of $L_{\pi}$ to the least element of $L_{\tau}$. 

\begin{algorithm}
\caption{Find a minimum embedding of $\pi$ into $\tau$, or demonstrate that no embeddings exist.}
\label{alg-rigid-case}
\begin{algorithmic}
\STATE{Initialise: $f \leftarrow f_0$.}
\STATE{Compute: $P(f)$.}
\WHILE{$f$ is defined everywhere, and $P(f)$ is non-empty}
\STATE{Choose $x \in P(f)$.}
\STATE{Update: $f(x) \leftarrow \max \{ f(x^{\la})^{\ra}_{\type{x}}, f(x^{\da})^{\ua}_{\type{x}} \} $}
\STATE{Recompute: $P(f)$.}
\ENDWHILE
\STATE{\textbf{Return}: $f$, which, if everywhere defined, equals $e_{\min}$.}
\end{algorithmic}
\end{algorithm}

The correctness of this algorithm is easy to establish. Within the while loop, if $f$ is everywhere defined, $P(f)$ is non-empty, and $x$ is chosen for the update step, then the updated version of $f$ is strictly greater than the original at $x$, and has the same value elsewhere. Since the set of rigid maps is finite, the loop can be executed a bounded number of times, and the algorithm halts.  In the case where $e_{\min}$ exists, we certainly have $f_0 \leq e_{\min}$. So, by Proposition \ref{prop-allowed-update}, it is always the case that $f \leq e_{\min}$. Therefore, when the loop terminates, the algorithm returns an embedding that is less than or equal to $e_{\min}$, and hence must equal $e_{\min}$. Should $e_{\min}$ not exist, then termination can only occur because $f$ is not everywhere defined, and so the algorithm fails as required in this case.

We can further combine the correctness analysis with a run-time analysis to obtain the following.

\begin{proposition}
Given a rigid $321$-avoiding permutation $\pi$ of size $k$ and a $321$-avoiding permutation $\tau$ of size $n$ there is an algorithm which determines an embedding of $\pi$ into $\tau$ if one exists, and fails otherwise, whose run-time is $O(kn)$.
\end{proposition}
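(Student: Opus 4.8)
The correctness of Algorithm~\ref{alg-rigid-case} having been established above, the proposition follows once we verify that the algorithm admits an implementation running in time $O(kn)$. This is a bookkeeping argument with three ingredients: a linear-time preprocessing phase, a constant cost per iteration of the while loop, and an $O(kn)$ bound on the number of iterations.

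First I would precompute, in time $O(n)$, everything about $\tau$ that the loop ever consults. The left-to-right scan of $\tau$ described above already yields $U_\tau$ and $L_\tau$ and, for each element $z$ of $\tau$, its position in the linear order $\le$ on its own type class, so that thereafter any two elements of the same type can be compared in $O(1)$ time. A few further linear scans tabulate $z^{\ra}_U$, $z^{\ra}_L$, $z^{\ua}_U$, and $z^{\ua}_L$ for every element $z$: for instance, $z^{\ra}_U$ is obtained by sweeping $\tau$ from right to left while remembering the leftmost element of $U_\tau$ met so far, and $z^{\ua}_U$ by running through the values $n, n-1, \dots, 1$ while remembering the lowest element of $U_\tau$ met so far. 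Once these tables are available, any expression of the form $\max\{f(x^{\la})^{\ra}_{\type{x}}, f(x^{\da})^{\ua}_{\type{x}}\}$, and hence condition~\eqref{eqn:problem-condition}, can be evaluated in $O(1)$ time. A scan of $\pi$ produces $U_\pi$, $L_\pi$, the type of each element, and the neighbour pointers $x^{\la}$, $x^{\ra}$, $x^{\da}$, $x^{\ua}$ in time $O(k)$; if $U_\pi \ne \emptyset = U_\tau$, or the analogous statement with $L$, we report failure at once. Finally I would store $P(f)$ in a doubly linked list, together with, for each element of $\pi$, a Boolean flag recording whether it currently lies in $P(f)$ and a pointer to its list node, so that membership queries, insertions, deletions, and the selection of an element in the loop all cost $O(1)$; the starting set $P(f_0)$ is computed by testing~\eqref{eqn:problem-condition} at the $k$ elements of $\pi$ in total time $O(k)$.

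The crux --- and the part I expect to require the most care --- is that one pass of the while loop costs only $O(1)$, whereas a literal reading of the line ``Recompute: $P(f)$'' would cost $O(k)$ and hence give only $O(k^2 n)$ overall. The observation that saves the day is that the truth of condition~\eqref{eqn:problem-condition} for an element $y$ depends solely on the three values $f(y)$, $f(y^{\la})$, and $f(y^{\da})$. Consequently, when $f(x)$ is updated, the membership in $P(f)$ of any element $y \ne x$ can change only if $y^{\la} = x$ or $y^{\da} = x$ --- that is, only for the at most two elements $x^{\ra}$ and $x^{\ua}$, which are the unique elements having $x$ as their left neighbour and as their lower neighbour, respectively. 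Moreover, $x$ itself is no longer a problem after its update, since the right-hand side of~\eqref{eqn:problem-condition} at $x$ is undisturbed by it. So ``Recompute: $P(f)$'' really amounts to re-testing~\eqref{eqn:problem-condition} at $x^{\ra}$ and at $x^{\ua}$ and adjusting the linked list, which is $O(1)$ using the preprocessed tables; computing the new value assigned to $f(x)$ is also $O(1)$. Hence each iteration runs in constant time.

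Finally, to bound the number of iterations: each one replaces $f(x)$ by a strictly larger (in the order $\le$) element of the same type class and leaves $f$ unchanged elsewhere, so the integer $\Phi(f) = \sum_{x \in \pi} \operatorname{rank}(f(x))$, with $\operatorname{rank}$ taken within $U_\tau$ or $L_\tau$ as appropriate, strictly increases at every iteration. Since $k \le \Phi(f) \le |U_\pi|\,|U_\tau| + |L_\pi|\,|L_\tau| \le kn$ at all times, there are fewer than $kn$ iterations. Summing the three contributions, the total running time is $O(n) + O(k) + O(kn) = O(kn)$ --- where we may assume $k \le n$, as otherwise $\pi$ cannot be contained in $\tau$ --- which is the claimed bound.
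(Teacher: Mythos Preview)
Your proof is correct and follows essentially the same approach as the paper's: linear-time preprocessing of the neighbour tables, maintaining $P(f)$ in a data structure supporting $O(1)$ operations (you use a doubly linked list with flags; the paper uses a queue with a boolean array), the key observation that after updating $f(x)$ only $x^{\ra}$ and $x^{\ua}$ need rechecking, and an $O(kn)$ bound on the number of iterations. Your potential-function formulation of that last bound is a slightly more explicit packaging of the paper's counting argument, but the content is the same.
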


\begin{proof}
The algorithm in question is Algorithm \ref{alg-rigid-case}, and it remains to show that we can achieve the bound claimed for the run-time. As noted, each execution of the loop increases the value of $f(x)$ for at least one $x$ (in the linear ordering, $\leq$, of either $U_{\tau}$ or $L_{\tau}$). Since there are at most $n$ possible values any $f(x)$ can take, and only $k$ distinct $x$, the loop certainly executes not more than $kn$ times. So, if we can establish that the computation in the loop can be carried out in constant time, the claim follows.

In an initialisation phase (not part of the algorithm proper) we can certainly compute tables of all the values $x^{a}_{b}$ for $x$ in both $\pi$ and $\tau$, $a \in \{ \la, \ra, \da, \ua \}$ and $b$ either absent or equal to one of $L$ or $U$. For $\pi$ this can be done in $O(k)$ time, and for $\tau$ in $O(n)$ time, so this initialisation can be absorbed into the claimed run-time. This ensures that  the ``Update'' operations in the loop can be carried out in constant time. We can maintain $P(f)$ as a queue, and separately maintain an array of boolean values that indicate whether or not $x \in P(f)$. To start the loop, we dequeue some $x$. The update operation ensures that $x$ is no longer a problem, so we can set its value in the array to \texttt{false}. Moreover, the update operation only changes the value of $f(x)$, and increases it. So it cannot ``solve'' any existing problem (other than that of $x$) and the only other way that it could change the problem set would be if $f(x)$ moved to the right of $f(x^{\ra})$ or above $f(x^{\ua})$. Therefore, in the recompute phase we only need to check those two possibilities, and enqueue $x^{\ra}$ and/or $x^{\ua}$ (setting their boolean values in the array to \texttt{true}) if necessary. By making reference to the array, we can ensure that we never have duplicate elements in the queue -- so every iteration of the loop really does result in a proper update.
\end{proof}

Let us end this section by providing a simple example illustrating how the presented algorithm works.

\begin{example}\label{ex:algo1}
Let us consider the text permutation $\tau=3 \, 1 \, 2 \, 4 \, 5 \, 9 \, 6 \, 7 \, 10 \, 8 \, 11 \, 13 \, 12$ represented in Figure~\ref{fig-321} and the pattern $\pi= 2 \, 1 \, 4 \, 5 \, 3$. Note that $\pi$ is indeed rigid, whereas $\tau$ is not; we can however ignore the fluid elements when looking for an embedding of $\pi$ into $\tau$ as explained above. The upper elements in $\pi$ are $2, 4$ and $5$ and the lower elements are $1$ and $3$. We now describe a possible run of the algorithm (the order in which problems are resolved is not determined):
\begin{enumerate}
\item We start with the initial rigid mapping $f=f_0$ defined as follows: $f_0(1)=f_0(3)=1$ and $f_0(2)=f_0(4)=f_0(5)=3$. By checking the condition in equation~\eqref{eqn:problem-condition} we see that all elements except $1$ and $2$ are problems: $P(f_0)=P_0= \left\lbrace 3, 4, 5\right\rbrace $.
\item We resolve the problem $x=4$ for which we have $\max \{ f(x^{\la})^{\ra}_{U}, f(x^{\da})^{\ua}_{U} \}=9$ and update $f$ such that $f(4)=9$. In order to recompute $P(f)$, we only need to check $x^{\ra}=x^{\ua}=5$. We cannot possibly have resolved the problem $5$ at the same time, so it remains in $P(f)$ and we have $P(f)= \left\lbrace 3, 5\right\rbrace $.
\item We resolve the problem $x=5$ for which we have $\max \{ f(x^{\la})^{\ra}_{U}, f(x^{\da})^{\ua}_{U} \}=10$ and update $f$ such that $f(5)=10$. In order to recompute $P(f)$, we only need to check $x^{\ra}=3$ ($5^{\ua}$ is not defined). We cannot possibly have resolved the problem $3$ at the same time, so it remains in $P(f)$ and we have $P(f)= \left\lbrace 3 \right\rbrace $.
\item We resolve the last problem $x=3$ for which we have $\max \{ f(x^{\la})^{\ra}_{L}, f(x^{\da})^{\ua}_{L} \}=8$ and update $f$ such that $f(3)=8$. 
In order to recompute $P(f)$, we only need to check $x^{\ua}=4$ ($3^{\ra}$ is not defined). The element $4$ is no longer a problem since it is large enough and thus $P(f)$ is empty. 
\item The algorithm terminates successfully since $P(f)$ is empty and has found the minimal embedding $e=e_{\min}$ of $\pi$ into $\tau$ defined as follows: $e(2)=3$, $e(1)=1$, $e(4)=9$, $e(5)=10$ and $e(3)=8$.
\end{enumerate} 
\end{example}

\section{Fluid Elements and the $O(kn)$ Algorithm for $321$-Avoiding Permutations}
\label{sec-fluid}

In this section we aim to complete the proof of Theorem \ref{thm-complexity-321} and to do so we must face the issue of fluid elements in the pattern $\pi$. Since a fluid element participates in no $21$, each other element of $\pi$ is either below and left of it, or above and right of it. This is represented most easily using another notational convention. Suppose that $\sigma$ and $\theta$ are two permutations of size $m$ and $n$ respectively. Then $\sigma \oplus \theta $ is the permutation whose points are:
\[
\sigma \cup \{ (i + m, \theta(i) + m) \st i \in [n] \}.
\]
Informally, to form $\sigma \oplus \theta$ we just place $\theta$ above and to the right of $\sigma$. Clearly $\oplus$ is associative, though  of course not commutative. 

For any $321$-avoiding permutation $\pi$ there is a unique decomposition:
\[
\pi = \pi_1 \oplus \pi_2 \oplus \cdots \oplus \pi_t
\]
where, for $1 \leq i \leq t$, $\pi_i$ is either rigid or a singleton, and it is never the case that both $\pi_i$ and $\pi_{i+1}$ are rigid. The singleton elements of this representation correspond precisely to the fluid elements of $\pi$.
For an example, consider again Figure~\ref{fig-321} where the black squares correspond to the blocks $\pi_i$ of this representation.

Given $\pi$ of size $k$ we can easily compute this representation in $O(k)$ time, simply by finding the fluid elements of $\pi$ (which are those elements that are both left-to-right maxima and right-to-left minima). Henceforth, we assume that this representation is given.

In the algorithm to determine whether $\pi$ embeds in $\tau$ we will construct, for each $1 \leq i \leq t$ at most two embeddings of $\pi_1 \oplus \cdots \oplus \pi_i$ into $\tau$ in such a way that, if any embedding of $\pi$ into $\tau$ exists, then at least one of the two partial embeddings can be extended to a full embedding.

So we first consider the following question: given an embedding, $e_i$, of $\pi_1 \oplus \cdots \oplus \pi_i$ into $\tau$ that extends to an (unknown) embedding, $e$, of $\pi$ into $\tau$, how can we construct a pair of  embeddings of $\pi_1 \oplus \cdots \oplus \pi_i \oplus \pi_{i+1}$ into $\tau$, at least one of which extends to an embedding of $\pi$ into $\tau$? 

We distinguish three cases for $\pi_{i+1}$.
For this purpose, let $T_i$ denote the set of elements that lie above and to the right of the image of $e_i$. Then, the image of $e$ restricted to the elements corresponding to $\pi_{i+1}$ is contained in $T_i$. 
Let us first consider the case where $\pi_{i+1}$ is rigid.
Then the image of $e$ on the elements corresponding to $\pi_{i+1}$ must be greater than or equal to (point by point), the image of $\pi_{i+1}$ under its minimum embedding into $T_i$. 
Thus, if we choose the minimal embedding of $\pi_{i+1}$ into $T_i$, the resulting embedding $e_{i+1}$ extends to an embedding of $\pi$ into $\tau$.
Though $T_i$ is, strictly speaking, not a permutation all of its associated operators are the same as those of $\tau$ (except some are undefined, e.g., the leftmost element of $T_i$ has no left neighbour in $T_i$ but may well have one in $\tau$). So, in this case we can use Algorithm~\ref{alg-rigid-case} in order to find the minimal embedding of $\pi_{i+1}$ into $T_i$ and hereby obtain a single extension of $e_i$ with the required property.

A similarly easy case is where $\pi_{i+1}$ is a singleton, i.e., a fluid element and $T_i$ begins with its least element (which is a fluid element as well). Then nothing can be lost by mapping $\pi_{i+1}$ to that element. 

The only remaining case is where $\pi_{i+1}$ is a singleton and the first element of $T_i$ is not its minimum. Since every element of $T_i$ lies above its first element, or above and to the right of its minimum, we can extend $e_i$ in two ways -- one sending $\pi_{i+1}$ to the leftmost element of $T_i$ (which is an upper element) and one to its minimum (which is a lower element), and one of these must be extensible.

Now it seems that we might have a problem -- given two partial embeddings of $\pi_1 \oplus \cdots \oplus \pi_i$ might they not extend to three or four candidate embeddings of $\pi_1 \oplus \cdots \oplus \pi_i \oplus \pi_{i+1}$? Indeed this is the case, but only if $\pi_{i+1}$ is a singleton. If it has four possible images, two belong to $U_\tau$ and two to $L_\tau$. Since all further elements of $\pi$ lie above and to the right of this fluid element, we only need to retain the embeddings where its image is the lesser of the two in each of these sets. Likewise, if it has three possible images (one of which might be fluid), at least one of them can be ignored. Another way to say this is that because $\pi_1 \oplus \cdots \oplus \pi_i \oplus \pi_{i+1}$ ends with its maximum element, so do its images under the embedding. Among three or more elements of a $321$-avoiding permutation there are at least one and at most two elements that do not participate as the $2$ in a $12$ pattern. We need retain only those embeddings whose maximum is not such a $2$, as otherwise they could be replaced by an embedding with a smaller maximum in forming a full extension.

Since the sum of the size of the rigid permutations in the representation of $\pi$ is at most the total size of $\pi$, the parts of the algorithm where we construct minimal rigid embeddings still require at most $O(kn)$ time in total. Dealing with singletons (fluid elements) clearly requires only constant time since we can find the next (to the right) fluid/upper/lower element in $\tau$ in constant time. Also, filtering out non-optimal extensions can be done in constant time since only the maximal elements of these extensions have to be compared and at most four extensions exist at the same time.
We conclude that the total cost of the algorithm is still $O(kn)$.
If $\tau$ contains $\pi$ the algorithm terminates successfully and returns one or possibly two embeddings.

What if no embedding exists? Then, following the plan above as if it did (beginning from an empty map, i.e., the case $i = 0$) we must at some point reach a failing case of Algorithm \ref{alg-rigid-case}, or possibly encounter an empty $T_i$. In either case, we fail since we have demonstrated that no embedding can be possible.

This completes the proof of Theorem \ref{thm-complexity-321}.

Again, let us provide an example demonstrating how the algorithm for arbitrary $321$-avoiding patterns works.

\begin{example}
As in Example~\ref{ex:algo1}, we consider the text permutation $\tau=3 \, 1 \, 2 \, 4 \, 5 \, 9 \, 6 \, 7 \, 10 \, 8 \, 11 \, 13 \, 12$ represented in Figure~\ref{fig-321}. The pattern is $\pi= 2 \, 1 \, 3 \, 4 \, 5 \, 7 \, 6 \, 8$. The upper elements in $\pi$ are $2$ and $7$, the lower ones are $1$ and $6$, and the fluid elements are $3$, $4$, $5$ and $8$. The algorithm proceeds block by block in the decomposition of $\pi$.
\begin{enumerate}
\item It starts with the rigid block consisting of the elements $2$ and $1$. Algorithm~\ref{alg-rigid-case} takes care of this block and, as in Example~\ref{ex:algo1}, $e(2)=3$ and $e(1)=1$.
\item The next block $\pi_2$ is the singleton element $3$. $T_1$, the set of elements that lie above and to the right of the image of $e$ starts with a fluid element and thus we can set $e(3)=4$.
\item We have the same situation for $\pi_3$ which consists of the singleton element $4$ and set $e(4)=5$.
\item The block $\pi_4$ is again a singleton element. However, $T_3$ does not start with its minimal element and thus two choices are possible for $e(5)$: we can either send $5$ to the leftmost upper element in $T_3$ or to the leftmost lower element. We store these two possibilities: $e_U(5)=9$ and $e_L(5)=6$.
\item The next block $\pi_5$ is rigid and we thus apply Algorithm~\ref{alg-rigid-case}  which is not detailed here. For the choice $e_U(5)=9$ it leads to $e_U(7)=13$ and $e_U(6)=12$ whereas for $e_L(5)=6$ it leads to $e_L(7)=10$ and $e_L(6)=8$. These two partial embeddings are rigid and thus comparable: $e_L \leq  e_U$ and we can disregard $e_U$. This is a good choice, since $e_U$ cannot be extended to an embedding of $\pi$ into $\tau$ since the last element $8$ cannot be mapped anywhere.
\item It remains to determine $e(8)$. Since $T_5$ starts with its minimal element we can choose this one and set $e(8)=11$.
\item The algorithm terminates successfully and returns an embedding of $\pi$ into $\tau$:  $e(2)=3$, $e(1)=1$, $e(3)=4$, $e(4)=5$, $e(5)=6$, $e(7)=10$, $e(6)=8$ and $e(8)=11$.
\end{enumerate}
\end{example}

\section{Skew-Merged Permutations}
\label{sec-skew-merged}

The permutations avoiding $321$ can be partitioned into two monotone increasing sequences. Of course the permutations avoiding $123$ can be similarly partitioned (into decreasing sequences) and the results of the previous section apply to them as well. However, the class of \emph{skew-merged permutations}, those that can be partitioned into an increasing and a decreasing sequence, requires further analysis, though as we shall see the analogue of Theorem \ref{thm-complexity-321} is also true in this context.
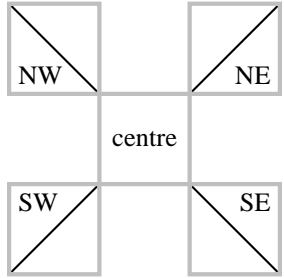
\begin{figure}
\begin{center}
	\begin{tikzpicture}[scale=0.03, baseline=(current bounding box.center)]
		\draw [thick, line cap=round] (-60,60)--(60,-60);
		\draw [thick, line cap=round] (-60,-60)--(60,60);
		\draw [lightgray, ultra thick, line cap=round] (-20,20) rectangle (-60,60);
		\draw [lightgray, ultra thick, line cap=round] (-20,-20) rectangle (-60,-60);
		\draw [lightgray, ultra thick, line cap=round] (20,20) rectangle (60,60);
		\draw [lightgray, ultra thick, line cap=round] (20,-20) rectangle (60,-60);
		\draw [lightgray, ultra thick, line cap=round, fill=white] (-20,-20) rectangle (20,20);
		\node at (60,20) [above left] {NE};
		\node at (-60,20) [above right] {NW};
		\node at (-60,-20) [below right] {SW};
		\node at (60,-20) [below left] {SE};
		\node at (0,0) {centre};
	\end{tikzpicture}
\end{center}
\caption{The decomposition of a skew-merged permutation into its centre and four corners.}
\label{fig-skew-merged}
\end{figure}

Towards this goal, we first identify a set of \emph{rigid} elements of a skew-merged permutation. In Figure \ref{fig-skew-merged} these are the elements lying in the corner regions. Specifically we say that an element of a skew-merged permutation is of type: \\
\centerline{
\begin{tabular}{ll}
NE & if it participates as a $3$ in a $213$; \\
NW & if it participates as a $3$ in a $312$; \\
SW & if it participates as a $1$ in a $132$; \\
SE & if it participates as a $1$ in a $231$,
\end{tabular}}
and we call any other element of a skew-merged permutation \emph{central}. We first verify that the illustration of a skew-merged permutation shown in Figure \ref{fig-skew-merged} is correct. This is a result due to Atkinson~\cite{atkinson:permutations-wh:}, and so we only sketch part of the proof to give its flavour.

\begin{proposition}
\label{prop-skew-merged-structure}
The elements of a skew-merged permutation decompose by type as shown in Figure \ref{fig-skew-merged}. Moreover, the central elements form a monotone subsequence.
\end{proposition}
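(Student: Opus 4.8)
The plan is to exploit two facts about skew-merged permutations: that they are exactly the permutations avoiding $2143$ and $3412$, and, for the last sentence of the proposition, a reduction to the fact that the only skew-merged permutations with no rigid element are the monotone ones.

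The first step is a purely bookkeeping reformulation of the four types. Unwinding the definitions: an element $x$ of a skew-merged permutation $\sigma$ fails to be of type NE iff the elements lying below and to the left of $x$ form an increasing sequence; fails to be NW iff the elements below and to the right of $x$ form a decreasing sequence; fails to be SW iff the elements above and to the right of $x$ form an increasing sequence; and fails to be SE iff the elements above and to the left of $x$ form a decreasing sequence. In particular $x$ is central iff all four of the quadrant regions around $x$ are monotone, each in the direction just indicated. Every later argument is an application of this characterisation.

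Second, I would establish that the four types are pairwise disjoint and that the elements of each type sit in the claimed corner region. Both are proved in the same style: assume a violation, take the small occurrence(s) witnessing the relevant type(s), and read off a forbidden copy of $2143$ or $3412$. For disjointness, if (say) $x$ were simultaneously NE and NW, then the $21$ witnessing NE-ness sits below and to the left of $x$, the $12$ witnessing NW-ness sits below and to the right of $x$, and these two pairs together with $x$ contain a $3412$; the other five pairs are handled by an analogous short computation or follow from this one by the reverse/complement/inverse symmetries of the class. For the regional picture, to show for instance that every NE element $w$ lies above and to the right of every central element $x$, suppose not; feeding the $21$ that witnesses NE-ness into the local characterisation of $x$, one either gets a forbidden $2143$ outright or discovers that one of the quadrants around $x$ is not monotone after all, contradicting centrality of $x$. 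Separating corners of adjacent type from each other is similar. This case analysis is elementary but long, and it is the part I expect to be the real work; this is presumably why, following Atkinson, one only writes out a representative case or two.

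Finally, for the statement that the central elements form a monotone subsequence, I would argue indirectly. Let $\sigma'$ be the permutation induced by $\sigma$ on its set of central elements. If $x$ is central in $\sigma$ then it is still central in $\sigma'$: a $21$ or $12$ inside $\sigma'$ that would witness a non-central type for $x$ is in particular such a witness inside $\sigma$, contradicting centrality in $\sigma$. Thus $\sigma'$ is a skew-merged permutation with no rigid elements at all. But any non-monotone permutation contains one of $132$, $213$, $231$, $312$, and in such an occurrence the extreme point is rigid: an occurrence of $213$ shows its top point to be NE, of $312$ to be NW, of $132$ to be SW, and of $231$ to be SE. Hence a skew-merged permutation with no rigid element is monotone, so $\sigma'$ is monotone, which is exactly the assertion that the central elements of $\sigma$ form a monotone subsequence.
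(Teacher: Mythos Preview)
Your plan is essentially the same as the paper's: both invoke the $\{2143,3412\}$-avoidance characterisation, dispose of the corner-versus-corner relations by short forbidden-pattern case checks (the paper does NE vs.\ SW and NE vs.\ NW as its sample cases), and handle the central part via the observation that central elements avoid $132$, $213$, $231$, $312$ and hence are monotone. Two small remarks: in your NE/NW disjointness sketch the five points $a,b,x,c,d$ need not contain $3412$ specifically---depending on the relative heights of $\{a,b\}$ and $\{c,d\}$ you sometimes get $2143$ instead---so the claim should read ``contain a $3412$ or a $2143$''; and your detour through $\sigma'$ in the last paragraph is correct but unnecessary, since a $213$ (say) among central elements of $\sigma$ already makes its `$3$' an NE element of $\sigma$ itself, which is exactly the paper's one-line argument.
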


\begin{proof}
Recall that another characterisation of skew-merged permutations is the following: they are those permutations that do not contain either $3412$ or $2143$. 

Let a skew-merged permutation $\pi$ be given, and suppose that $\pi = I \cup D$ is a partition of $\pi$ into a monotone increasing and monotone decreasing sequence. Consider first elements of type NE (all other types can be handled by parallel arguments due to symmetry). Since any such participates as a  $3$ in a $213$, it must belong to $I$ (otherwise, the elements participating as the $2$ and $1$ would both belong to $I$ which is of course impossible). So the elements of type NE form a monotone increasing sequence. 

Suppose that $C$ is of type NE, with $BAC$ an occurrence of $213$ and $a$ is of type SW with $acb$ an occurrence of $132$. Then $a \in I$ for similar reasons to the preceding ones. If $C$ preceded $a$ (and hence was also smaller than it) then, $BAcb$ would be an occurrence of $2143$. So, all elements of type SW lie below and to the left of those of type NE. Now suppose that $z$ is of type NW, with $zxy$ an occurrence of $312$. If $C$ were to precede $z$ we would have various cases: first if $C$ lay below $y$ then $BAzy$ would be $2143$, if $C$ lay above $y$ but below $z$ then $Czxy$ would be $3412$, if $C$ lay above $z$ and $B$ above $y$, then $BCxy$ would be $3412$, but if $B$ lay below $y$ then $BAzy$ would be $2143$. As all these cases lead to contradictions, $C$ must follow $z$. 

All other cases can be dealt with similarly. Finally, to see that the central elements form a monotone sequence observe that they must certainly avoid all of $132$, $213$, $231$, and $312$ lest some of them be non-central. But, only monotone permutations (of either type) avoid these four permutations.
\end{proof}

This decomposition can be computed in linear time:

\begin{lemma}
Given a skew-merged permutation of size $n$, there is an algorithm that computes its partition into types in $O(n)$-time.
\end{lemma}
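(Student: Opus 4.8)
The plan is to show that, given a skew-merged permutation $\pi$ of size $n$, a single left-to-right scan with a constant amount of bookkeeping suffices to classify every element as NE, NW, SW, SE, or central. The key observation is that the five regions of Figure~\ref{fig-skew-merged} are separated by monotone boundaries, so membership of an element $x$ in a corner region is equivalent to the existence of a short witness pattern (a $213$, $312$, $132$, or $231$) using only elements in prescribed positions relative to $x$. Concretely, I would first recall from Proposition~\ref{prop-skew-merged-structure} that the elements of each type form a monotone sequence, that the central elements are monotone, and that the corners sit in their named quadrants around the centre; this means a correct classification is forced once we can detect, for each $x$, whether it is the `$3$' of a $213$ or $312$ (i.e.\ $x$ is large, and some pair to its left forms a descent, or an ascent, below it), or the `$1$' of a $132$ or $231$ (i.e.\ $x$ is small, and some pair to its right forms an ascent, resp.\ descent, above it).

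The algorithm itself: perform a forward scan maintaining, as in the $321$-avoiding case, the running maximum $\ell$ of values seen so far, and — crucially — also whether a $21$ (descent) pattern and whether a $12$ (ascent) pattern has already occurred strictly to the left using values below the current element. In fact it is cleaner to precompute, in $O(n)$ time each, four auxiliary arrays: for every position $i$, the minimum value and the maximum value occurring strictly to its left, and likewise strictly to its right. With these, $x$ is of type NE iff there exist two elements to its left, both below $x$, in decreasing order — which, one checks, happens iff the leftmost element below $x$ that lies to its left is itself preceded by something larger-but-still-below-$x$; but this is awkward, so instead I would lean on the structural proposition: once we know the corners are monotone quadrant-blocks, an element is NE iff it is not central and lies in the NE quadrant, and the quadrant boundaries are themselves determined by the extreme central elements. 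So the real procedure is: (1) find the monotone central sequence, (2) everything to its upper-left-or-lower-right that is above-and-right of it goes NE or SW, etc. — the four corners are then read off by comparing each non-central $x$'s position and value against the two endpoints of the central block.

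Thus the concrete steps are: run the $321$-style scan (or its $123$-mirror, depending on whether the centre is increasing or decreasing — decide this first by a single pass checking which of $123$, $321$ the bulk avoids locally) to tentatively mark candidate central elements; verify in one more pass, using the left/right min/max arrays, that the marked central set is exactly those elements involved in none of $132,213,231,312$; then the remaining elements split into the four corners by sign of (position $-$ position of central block) and (value $-$ value of central block), each test being $O(1)$. Each phase is a constant number of linear passes, so the total is $O(n)$.

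The main obstacle is phase (1): correctly isolating the central elements when we do not yet know whether the centre is increasing or decreasing, and handling the degenerate overlaps at the four "elbows" where a corner block abuts the centre (an element that is simultaneously, say, the smallest of the NE corner and adjacent to the centre). I expect this to be dispatched by appealing directly to Proposition~\ref{prop-skew-merged-structure}: because that proposition already pins down the global geometry, the only thing the algorithm must do is \emph{recognise} the pieces, not prove they fit together, and recognition of "participates as the $3$ in a $213$" reduces to a bounded query against the precomputed prefix/suffix extrema arrays, which is visibly $O(1)$ per element. I would present this reduction explicitly and then note that summing $O(1)$ over $n$ elements, plus $O(n)$ for the four scans that build the arrays and the tentative centre, yields the claimed $O(n)$ bound.
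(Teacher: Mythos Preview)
Your proposal has a genuine gap. The claim that ``recognition of `participates as the $3$ in a $213$' reduces to a bounded query against the precomputed prefix/suffix extrema arrays'' is false as stated. Compare the skew-merged permutations $1\,2\,3\,4$ and $1\,3\,2\,4$: in both, the prefix before the last position has minimum $1$ and maximum $3$, yet in the first permutation the element $4$ is central (the prefix is increasing, so no $213$ exists), while in the second it is of type NE (since $3,2,4$ realises $213$). Prefix and suffix extrema alone cannot distinguish these two situations; what matters is whether a \emph{descent} (respectively, ascent) has occurred among the relevant elements, not merely what the extreme values are. You flag phase (1) as ``the main obstacle'' and then defer to Proposition~\ref{prop-skew-merged-structure}, but that proposition only guarantees the geometry exists---it does not supply the recognition step you still owe.

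The paper's argument is both different in structure and supplies exactly the missing ingredient. Rather than classifying elements one at a time, it observes that the five regions are delimited by four axis-parallel lines, and locates each line with a single directional scan that maintains only the position of the last ascent $a$ and the last descent $d$. For the East boundary, the first position $i$ with $\theta(i)<\theta(a)$ witnesses $\theta(i)$ as the $1$ of a $231$, and the first $i$ with $\theta(i)>\theta(d)$ witnesses it as the $3$ of a $213$; this test is sound precisely because the prefix up to the boundary is a $\{231,213\}$-avoider and hence has the characteristic $>$-shape. Four such scans give all four boundaries, after which every element is typed in $O(1)$ by which cell it lies in. The datum you were missing---last ascent/descent rather than running min/max---is the crux.
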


\begin{proof}
Let $\theta$ be an arbitrary skew-merged permutation. 
Notice that the part of $\theta$ to the left of the leftmost element of type NE or SE avoids $231$ and $213$. Such permutations have a characteristic $>$ shape since any element must not be intermediate in value between two to its right. We are interested in finding the maximum prefix of $\theta$ which has this characteristic shape, or what amounts to the same thing, the leftmost element of $\theta$ such that the prefix ending at that element involves $231$ or $213$. 

This can be accomplished in linear time: we scan $\theta$ from left to right and determine for every position $i$ whether it is an ascent ($\theta(i)<\theta(i+1)$) or a descent ($\theta(i)>\theta(i+1)$). At any moment we only store the last encountered ascent $a$ and descent $d$. The element $\theta(i)$ plays the role of a $1$ in a $231$ pattern, if $\theta(i)<\theta(a)$; it plays the role of a $3$ in a $213$, if $\theta(i)>\theta(d)$.
If either of the two conditions apply to position $i$, we have identified the leftmost element of type NE or SE. 
That is, we have found the boundary line between the centre region and the Eastern region of $\theta$.

In a similar manner we can find all of the boundary lines: by scanning $\theta$ from right to left we find the boundary between West and centre, by scanning from bottom to top we find the boundary between South and centre and by scanning from top to bottom we find the boundary between North and centre.
We can thus compute the partition of $\theta$ into types by scanning $\theta$ four times.
\end{proof}

We will now describe an algorithm for skew-merged patterns and texts and provide the necessary theoretic background. This algorithm consists of two main parts: In the first part, the non central elements of the pattern $\pi$ are embedded into $\tau$ using a similar approach as for rigid permutations and adapting Algorithm~\ref{alg-rigid-case} which will deliver a minimal embedding of the non-central elements. In the second part, we will extend this minimal embedding to the central elements of $\pi$.

In this sense, the non-central elements of a skew-merged permutation correspond to the rigid elements of a $321$-avoiding permutation. Since they are defined by the occurrence of certain patterns and since embeddings preserve such patterns it is immediately clear that if $e: \pi \to \tau$ is an embedding of one skew-merged permutation into another, then $e$ must preserve the type of all non-central elements. 

In order to be able to speak of minimal embeddings in the context of skew-merged permutations, we need to introduce some new notation. For two non-central elements of the same type we write $x \lhd y$ if $x$ lies strictly further out from the center than $y$ ($x \unlhd y$ will mean that either $x \lhd y$ or $x=y$). The minimum with respect to this relation $\lhd$ is denoted by $\textit{outer}$ and the maximum by $\textit{inner}$. 
For two embeddings, $e_1$ and $e_2$ of the non-central elements of $\pi$ into $\tau$ that preserve types we can define their meet by
\[
e_1 \wedge e_2 (x) = \textit{outer} \{e_1(x), e_2(x)\}
\]
for all non-central $x \in \pi$. Then, just as in the $321$-avoiding case, $e_1 \wedge e_2$ is also an embedding of the non-central elements of $\pi$ into those of $\tau$:

\begin{lemma}
Let $\pi$ be a skew-merged pattern with no central elements and $\tau$ be an arbitrary skew-merged permutation. Then the following holds: If $e_1$ and $e_2$ are embeddings of $\pi$ into $\tau$ then their meet $f := e_1 \wedge e_2$ as defined above is an embedding as well.
\end{lemma}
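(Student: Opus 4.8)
The plan is to verify directly, by means of the neighbour criterion in Lemma~\ref{obs-check-neighbors}, that $f$ is an embedding. I would begin by recording the one preliminary fact needed about $e_1$ and $e_2$: since the four types are defined by pattern containment and embeddings preserve patterns, each $e_i$ sends every element of $\pi$ to an element of $\tau$ of the same type (as is already noted in the text preceding the lemma). As $\pi$ has no central elements, every $x\in\pi$ is of one of the four types NE, NW, SW, SE, and $e_1(x),e_2(x)$ lie in the corresponding corner region of $\tau$; hence $f(x)=\textit{outer}\{e_1(x),e_2(x)\}$ is well defined and of the same type as $x$. It then only remains to show that for each $x\in\pi$: (i) if $x^{\la}\neq\bot$ then $f(x)$ lies strictly to the right of $f(x^{\la})$, and (ii) if $x^{\da}\neq\bot$ then $f(x)$ lies strictly above $f(x^{\da})$.

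I would prove (i) and leave (ii) to the obvious symmetry (exchanging ``left/right'' for ``below/above'' and the East--West split for the North--South split). Fix $x$ with $w:=x^{\la}\neq\bot$, and write $f(x)=e_a(x)$ and $f(w)=e_b(w)$ for suitable $a,b\in\{1,2\}$. Since $w$ lies strictly to the left of $x$ and each $e_i$ is an embedding, $e_i(w)$ lies strictly to the left of $e_i(x)$ for $i\in\{1,2\}$. If $a=b$ there is nothing more to do. If $a\neq b$, I would split on the type of $x$. When $x$ is of type NE or SE, $\textit{outer}$ on elements of that type selects the one further to the right, so $f(x)=e_a(x)$ lies weakly to the right of $e_b(x)$, which lies strictly to the right of $e_b(w)=f(w)$; hence $f(x)$ lies strictly to the right of $f(w)$. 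When $x$ is of type NW or SW, Proposition~\ref{prop-skew-merged-structure} (which puts the NW and SW regions entirely to the left of the NE and SE regions) forces every element of $\pi$ to the left of $x$ — and $\pi$ has no central elements — to be of type NW or SW; in particular $w$ is. Then $\textit{outer}$ on elements of $w$'s type selects the one further to the left, so $f(w)=e_b(w)$ lies weakly to the left of $e_a(w)$, which lies strictly to the left of $e_a(x)=f(x)$; again $f(x)$ lies strictly to the right of $f(w)$. Part (ii) is handled identically, using instead that the NE and NW regions lie entirely above the SE and SW regions, so that the lower neighbour of an SE- or SW-type element is again of type SE or SW. Invoking Lemma~\ref{obs-check-neighbors} then completes the proof.

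The step I expect to be the crux is precisely this appeal to Proposition~\ref{prop-skew-merged-structure} to exclude the one dangerous configuration. The meet pushes each image ``outward'', which in the horizontal direction means rightward for Eastern-type elements and leftward for Western-type ones; a violation of (i) could therefore only occur at a Western-type $x$ whose left neighbour is of Eastern type, exactly the situation the corner structure forbids (and symmetrically for (ii) with Southern/Northern and the vertical direction). Beyond isolating this point, the argument is a routine transcription of the reasoning underlying Theorem~\ref{thm-321-lattice}, so I would expect the write-up to be short.
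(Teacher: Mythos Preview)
Your proof is correct. The core observation---that on each corner type \textit{outer} coincides with ``rightmost'' (for NE, SE) or ``leftmost'' (for NW, SW), and analogously in the vertical direction---is exactly what drives the paper's proof as well, so the two arguments are essentially the same in substance.

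The organisation differs slightly: the paper checks an arbitrary pair $x,y$ directly and splits into three cases (same type; opposite corners; same side but different types), whereas you invoke Lemma~\ref{obs-check-neighbors} to reduce to the two neighbour conditions and then split only on whether $x$ is Eastern or Western (respectively Northern or Southern). Your route is a little tidier---the neighbour reduction collapses the paper's ``opposite corners'' case into the others and makes explicit that the only potentially dangerous configuration (a Western $x$ with an Eastern left neighbour, or a Southern $x$ with a Northern lower neighbour) is ruled out by Proposition~\ref{prop-skew-merged-structure}. Conversely, the paper's version is self-contained and does not rely on the neighbour lemma. Either write-up would be acceptable.
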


\begin{proof}
Let $x \neq y$ be two elements in $\pi$ and let us assume that $x$ lies to the left of $y$ in $\pi$. We need to show that $f(x)$ lies to the left of  $f(y)$ in $\tau$ and that the horizontal relation between $x$ and $y$ is preserved as well. The key argument is that taking the minimum of the elements in the above sense automatically translates into taking their actual minimum or maximum (equivalently, the leftmost or rightmost element), depending on the type of element. In order to give a formal proof, we distinguish between three cases.
\begin{itemize}
\item If $x$ and $y$ are of the same type. We detail the case of SW elements here, as the other ones are analogous (one simply needs to replace ``minimum'' by ``maximum'' and/or ``left of'' by ``right of'' depending on the type). In this case, taking the minimum of the elements in the sense defined earlier is nothing else than taking their actual minimum, which again is the same as taking the left-most element. Since we have that $f(x) \leq e_1(x) < e_1(y)$ (and $f(x)$ is to the left of $e_1(y)$) as well as $f(x) \leq e_2(x) < e_2(y)$ (and $f(x)$ is to the left of $e_2(y)$), it follows that $f(x) < \textit{outer}(e_1(y),e_2(y))=f(y)$ (and $f(x)$ is to the left of $f(y)$).
\item If $x$ and $y$ lie in opposite corners of the diagram. In this case the statement follows immediately from the fact that an embedding preserves types. Indeed, all SW elements are to the left of and smaller than NE ones and all NW elements are to the left of and larger than SE ones. Thus both the vertical as well as the horizontal relation between $x$ and $y$ is preserved.
\item The remaining cases, where $x$ and $y$ are not of the same type, but are both elements in the south, north, east or west. We detail the case of two elements in the north, i.e., $x$ is a NW element and $y$ a NE one. The other cases can be dealt with analogously (by interchanging minimum with maximum or vertical with horizontal positions). Without loss of generality, we further assume that $x < y$. First, it is clear that $f(x)$ lies strictly to the left of $f(y)$ since types are preserved. Second, regarding the horizontal relation between $x$ and $y$, let us note that taking the element that is furthest away from the centre translates into taking the maximum. Thus, we have that $f(y) \geq e_1(y) > e_1(x)$ as well as $f(y) \geq e_2(y) > e_2(x)$ which implies that $f(y) > f(x)$.
\end{itemize}
The consideration of these cases completes the proof.
\end{proof}

Observe the following: if either $e_1$ or $e_2$ was the restriction of an actual embedding, $e$, of $\pi$ into $\tau$ to the non-central elements then we can extend the mapping $e_1 \wedge e_2$ to central elements using $e$ there, and thereby obtain an embedding. So, among all embeddings of $\pi$ into $\tau$ there is one whose effect on the non-central elements is the minimum of all the embeddings of the non-central elements of $\pi$ into those of $\tau$. We will see later on how such an extension to the central elements of $\pi$ can be found.

This minimum embedding of the non-central elements can be found by modifying the definition of the problem set and the update rule of Algorithm \ref{alg-rigid-case}. The only thing we need to do in order to reflect the new notion of minimum/maximum in this definition, is to redefine the notation introduced in the Introduction. Given a non-central element $x$ in a skew-merged permutation $\pi$, we denote by (wherever possible): \\
\begin{tabular}{ll}
$x^{oh}$ & the next non-central element further \underline{o}ut from the center in  \underline{h}orizontal direction, \\
$x^{ih}$ & the next non-central element towards the center (\underline{i}nwards) in  \underline{h}orizontal direction, \\
$x^{iv}$ & the next non-central element towards the center (\underline{i}nwards) in  \underline{v}ertical direction, and \\
$x^{ov}$ & the next non-central element further \underline{o}ut from the center in  \underline{v}ertical direction.
\end{tabular} \\
For example, in the skew-merged pattern $\pi$ depicted in Figure~\ref{fig-skew-merged-ex} and $x= 7$, we have: $x^{oh}=1$, $x^{ih}=\bot$, $x^{iv}=6$, and $x^{ov}=\bot$.

We also define the type of a non-central element $x$ in a skew-merged permutation, $T(x)$, to be the corner in which $x$ lies, i.e., $T(x)$ can be NW, SW, NE or SE.
Moreover, we extend the notation introduced above as follows: For $a\in\{oh,ih,iv,ov\}$ and $b\in\{NE,SE,SW,NW\}$, we define $x^{a}_{b}$ to be the next non-central element in $\pi$ according to direction $a$ that is of type $b$. In other words, $x^a_b$ is the first element in the sequence $(x^a, (x^a)^a, \dots)$ of type $b$.
If there is no such element, i.e., no element in $(x^a, (x^a)^a, \dots)$ is of type $b$, then we set $x^{a}_{b}=\bot$.
For example, in the skew-merged pattern $\pi$ depicted in Figure~\ref{fig-skew-merged-ex} and  $x=5$, we have $x^{ov}_{NW}=7$, $x^{ov}_{NE}=6$, whereas  $x^{oh}_{SE}=\bot$ and $x^{iv}_{SE}=\bot$.

With this new notation, one can see that an analogue of Proposition~\ref{prop-allowed-update} holds for skew-merged permutations:

\begin{proposition}
Suppose that $e$ is an embedding of the non-central elements of $\pi$ into $\tau$, $f$ is a mapping of the non-central elements of $\pi$ into $\tau$ that preserves types, and, for all non-central $x \in \pi$, $f(x) \unlhd e(x)$. Then, for all non-central $x \in \pi$:
\[
\textit{inner} \left\lbrace  f(x^{oh})^{ih}_{\type{x}}, f(x^{ov})^{iv}_{\type{x}} \right\rbrace  \unlhd e(x).
\] 
\end{proposition}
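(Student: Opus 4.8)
The plan is to mirror the proof of Proposition~\ref{prop-allowed-update} verbatim, simply replacing the $321$-avoiding notions of ``left/right'', ``above/below'', and the linear order $\leq$ by their skew-merged analogues ``outer/inner in the horizontal direction'', ``outer/inner in the vertical direction'', and the relation $\lhd$. Fix a non-central element $x$ of $\pi$. I would establish the two inequalities $f(x^{oh})^{ih}_{\type{x}} \unlhd e(x)$ and $f(x^{ov})^{iv}_{\type{x}} \unlhd e(x)$ separately; since $\lhd$ is a linear order on the non-central elements of $\tau$ of any fixed type, and since $e$ and $f$ both preserve types, the two quantities being compared all lie in the same type class of $\tau$, so the $\textit{inner}$ of the two is itself $\unlhd e(x)$ as soon as each one individually is.

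For the first inequality I would argue as follows. By definition, $x^{oh}$ is a non-central element of $\pi$ lying further out horizontally than $x$ and of the same type as $x$ (here one uses that moving outward horizontally within a single corner region stays within that corner, exactly as in Proposition~\ref{prop-skew-merged-structure}; if $x^{oh}=\bot$ the inequality is vacuous under the convention $\textit{inner}\{y,\bot\}$). Because $e$ is an embedding preserving types, $e(x)$ and $e(x^{oh})$ have the same type and $e(x)$ lies inward of $e(x^{oh})$ horizontally, i.e., $e(x) \lhd e(x^{oh})$; hence $e(x)$ lies inward of (or equals) $e(x^{oh})^{ih}_{\type{x}}$, the innermost element of type $\type{x}$ reached from $e(x^{oh})$ by stepping inward — that is, $e(x^{oh})^{ih}_{\type{x}} \unlhd e(x)$. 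Wait — I need the direction the other way; the correct chain is that $e(x^{oh})^{ih}_{\type{x}}$ is the first element of type $\type{x}$ strictly inward of $e(x^{oh})$, and $e(x)$ is \emph{some} such element, so $e(x^{oh})^{ih}_{\type{x}} \unlhd e(x)$. Finally, the hypothesis $f(x^{oh}) \unlhd e(x^{oh})$ together with monotonicity of the operator $(\cdot)^{ih}_{\type{x}}$ with respect to $\unlhd$ gives $f(x^{oh})^{ih}_{\type{x}} \unlhd e(x^{oh})^{ih}_{\type{x}} \unlhd e(x)$, as desired. The argument for $f(x^{ov})^{iv}_{\type{x}} \unlhd e(x)$ is identical with ``horizontal'' replaced by ``vertical''.

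The one genuinely new point, and what I expect to be the main (minor) obstacle, is checking that the combinatorial facts used implicitly in the $321$ case still hold in the skew-merged setting: namely that for non-central elements of $\tau$ of a common type, the relations ``further out horizontally'' and ``further out vertically'' both coincide with the single linear order $\lhd$, so that $\textit{inner}$ is well-defined on any pair of same-type elements and the operators $(\cdot)^{ih}_{b}$, $(\cdot)^{iv}_{b}$ are $\unlhd$-monotone. This is exactly the content of Proposition~\ref{prop-skew-merged-structure} (each corner region is a monotone sequence), so it is available to us; once it is invoked, the rest is a routine transcription of the earlier proof. I would therefore keep the write-up short: state that by symmetry it suffices to treat, say, an NE element $x$, carry out the horizontal case in full using $\lhd$ and $\textit{inner}$ in place of $\leq$ and $\max$, and remark that the vertical case and the other three types follow by the symmetries of Figure~\ref{fig-skew-merged}.
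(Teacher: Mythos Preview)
Your overall plan is exactly what the paper does (it simply says the proof is analogous to that of Proposition~\ref{prop-allowed-update}), and your chain $f(x^{oh})^{ih}_{\type{x}} \unlhd e(x^{oh})^{ih}_{\type{x}} \unlhd e(x)$ is the right skeleton. There is, however, one factual slip that propagates through your write-up: $x^{oh}$ is \emph{not} required to have the same type as $x$. The paper's own worked example has $x=7$ of type NW with $x^{oh}=1$ of type SW; moving outward horizontally from a western element can take you from the NW corner to the SW corner (and dually on the east). Consequently your sentence ``$e(x)$ and $e(x^{oh})$ have the same type'' and the use of ``$e(x)\lhd e(x^{oh})$'' (a relation defined only between same-type elements) are both unjustified.

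Fortunately the argument survives once you replace the type claim by the weaker positional fact you actually need: $x$ and $x^{oh}$ lie on the same horizontal side of the centre (both east or both west), so ``inward horizontally'' points the same way for $x$, for $x^{oh}$, and for their images under any type-preserving map. Then (i) since $e$ is an embedding it preserves left/right, so $e(x)$ lies strictly inward horizontally from $e(x^{oh})$; (ii) $e(x)$ has type $\type{x}$, hence is some element of that type strictly inward from $e(x^{oh})$, giving $e(x^{oh})^{ih}_{\type{x}} \unlhd e(x)$; and (iii) from $f(x^{oh}) \unlhd e(x^{oh})$ (a legitimate comparison, both having type $\type{x^{oh}}$) one gets that $f(x^{oh})$ is at least as far out horizontally as $e(x^{oh})$, whence $f(x^{oh})^{ih}_{\type{x}} \unlhd e(x^{oh})^{ih}_{\type{x}}$. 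With that correction your proof matches the paper's intended one.
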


The proof of this Proposition is analogous to the one of Proposition~\ref{prop-allowed-update}. 

We thus say that a non-central element $x$ of a skew-merged permutation is a \textit{problem} if:
\begin{equation}\label{eqn:problem-condition-skew}
f(x) \lhd \textit{inner} \left\lbrace  f(x^{oh})^{ih}_{\type{x}}, f(x^{ov})^{iv}_{\type{x}} \right\rbrace,
\end{equation}
for a mapping $f$ of the non-central elements of $\pi$ into $\tau$ that preserves types. Moreover, when we resolve the problem $x$ by updating the value of $f(x)$ this is done analogously to the case of $321$-avoiding permutations and we set $f(x) = \textit{inner} \left\lbrace  f(x^{oh})^{ih}_{\type{x}}, f(x^{ov})^{iv}_{\type{x}} \right\rbrace$.

This finishes the description of the necessary modifications of Algorithm~\ref{alg-rigid-case}.
As for Algorithm~\ref{alg-rigid-case} we assume that $x^a$ and  $x^a_b$ for $a\in\{oh,ih,iv,ov\}$ and $b\in\{NE,SE,SW,NW\}$ is precomputed and thus can be found in constant time.
Given the decomposition of $\pi$ and $\tau$ into types, these precomputations can be done in linear time.
Both the steps required for the update of $f$ and the recomputation of the problem set $P(f)$ can be carried out in constant time.

To complete the proof of Theorem \ref{thm-complexity-skew-merged} we must show that, having found a minimum embedding of the non-central elements of $\pi$ to those of $\tau$, the existence of a full embedding can also be determined sufficiently quickly.
%
We need to determine whether or not the central part of $\pi$ can be embedded into the remainder of $\tau$, i.e., the set  of elements in $\tau$ which consists of central elements and all adjacent elements that have not yet been used in the minimum embedding.
The central part of $\pi$ is a monotone pattern of a certain size at most $k$, and the remaining part of $\tau$ is a skew-merged permutation of size at most $n$ (whose endpoints we know). 

In general, finding a longest increasing (or decreasing) subsequence of size $k$ in a permutation of size $n$ can be done in time $O(n \log \log k)$~\cite{crochemore:fast-computatio:}. Thus, checking whether the central part can be embedded into the remaining part of $\tau$ can be done within the $O(n k)$ runtime bound of our algorithm. In the special case of skew-merged permutations, finding a longest increasing (resp., decreasing) subsequence can even be done in $O(n)$ time. To be more precise, $O(n)$ time is only required for obtaining the partition into five types as represented in Figure~\ref{fig-skew-merged} (which is already available in our case); the remaining steps require only constant time.

Indeed, for longest increasing  subsequences the following observations can be made (the case of  decreasing subsequences can be treated analogously): The elements of type SW and NE will always contribute to a longest increasing subsequence. Moreover, such a subsequence also contains as many elements as possible from the centre, i.e., if the center is increasing then all elements contribute to a longest increasing subsequence and if the center is decreasing we can arbitrarily pick one centre element. Note that it is never advantageous to include elements of type NW or SE. This can be seen as follows: At most one NW or SE element can be part of an increasing subsequence. Thus, if the centre is non-empty, it is certainly not advantageous to include a NW or SE element. Let us assume that the centre is empty. An element of type NW occurs as a $3$ in a $312$ pattern.  Among the elements playing the role of the $1$ and the $2$, at least one element (and possibly both of them) is of type SW or NE. Thus, including an element of type NW would force us to exclude one or two elements of type SW or NE. In other words, we cannot increase the size of an increasing subsequence by adding an element of type NW. A similar argument holds for elements of type SE.  We conclude that for the size of the longest increasing subsequence we only have to add the number of elements of type SW and NE as well as the size of the longest increasing subsequence in the central part.
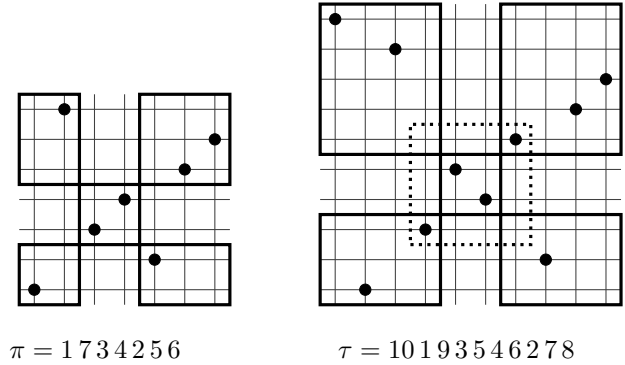
\begin{figure}
\begin{center}
\begin{tikzpicture}[scale=0.4]
\tikzstyle{l}=[circle, draw, fill=black, inner sep=0.0525cm]

	\begin{scope}
		\foreach \i in {1,...,7} {
		\draw [darkgray] (\i,0.5) -- (\i,7.5);
	}
	\foreach \i in {1,...,7} {
		\draw [darkgray] (0.5,\i) -- (7.5,\i);
	}

	\node[l] at (1,1) {};
	\node[l] at (2,7) {};
	\node[l] at (3,3) {};
	\node[l] at (4,4) {};
	\node[l] at (5,2) {};
	\node[l] at (6,5) {};
	\node[l] at (7,6) {};

	\draw[draw=black, very thick] (0.5,0.5) rectangle (2.5,2.5);
	\draw[draw=black, very thick] (7.5,0.5) rectangle (4.5,2.5);
	\draw[draw=black, very thick] (0.5,7.5) rectangle (2.5,4.5);
	\draw[draw=black, very thick] (7.5,7.5) rectangle (4.5,4.5);
	\draw[draw=black, very thick] (2.5,2.5) rectangle (4.5,4.5);
	\node at (3,-1) {$\pi = 1 \, 7 \, 3 \, 4 \, 2 \, 5  \, 6$};
	\end{scope}
	
	\begin{scope}[shift={(10,0)}]
	\foreach \i in {1,...,10} {
		\draw [darkgray] (\i,0.5) -- (\i,10.5);
	}
	\foreach \i in {1,...,10} {
		\draw [darkgray] (0.5,\i) -- (10.5,\i);
	}

	\node[l] at (1,10) {};
	\node[l] at (2,1) {};
	\node[l] at (3,9) {};
	\node[l] at (4,3) {};
	\node[l] at (5,5) {};
	\node[l] at (6,4) {};
	\node[l] at (7,6) {};
	\node[l] at (8,2) {};
	\node[l] at (9,7) {};
	\node[l] at (10,8) {};
	\node at (5,-1) {$\tau = 10 \, 1 \, 9 \, 3 \, 5 \, 4 \,  6 \, 2 \, 7 \, 8$};

	\draw[draw=black, very thick] (0.5,0.5) rectangle (4.5,3.5);
	\draw[draw=black, very thick] (10.5,0.5) rectangle (6.5,3.5);
	\draw[draw=black, very thick] (0.5,10.5) rectangle (4.5,5.5);
	\draw[draw=black, very thick] (10.5,10.5) rectangle (6.5,5.5);
	\draw[draw=black, very thick] (4.5,3.5) rectangle (6.5,5.5);
	\draw[draw=black, very thick, dotted] (3.5,2.5) rectangle (7.5,6.5);
	\end{scope}
\end{tikzpicture}
\end{center}
\caption{Decomposition of the skew-merged permutations $\pi$ and $\tau$ into their centres and four corners.}
\label{fig-skew-merged-ex}
\end{figure}
Let us end this section by providing a simple example illustrating how this modified version of Algorithm~\ref{alg-rigid-case} works.

\begin{example}
Let us consider the text permutation $\tau=10 \, 1 \, 9 \, 3 \, 5 \, 4  \, 6 \, 2 \,  7 \, 8$ and the pattern $\pi= 1 \, 7 \, 3 \, 4 \, 2 \, 5  \, 6$. Both permutations and their decomposition into types are shown in Figure~\ref{fig-skew-merged-ex}. 
We start by describing a possible run of the algorithm (the order in which problems are resolved is not determined) finding the minimal embedding of the non-central elements of $\pi$ into $\tau$:
\begin{enumerate}
\item We start with the initial mapping $f=f_0$ that sends all non-central elements of one type in $\pi$ to the minimal element of this type in $\tau$ (i.e., the element that is furthest out from the center). It is defined as follows:
$f(1)=1$, $f(7)=10$, $f(5)=f(6)=8$ and $f(2)=2$.
We compute the problem set using the condition in equation~\eqref{eqn:problem-condition-skew} and obtain $P(f)= \left\lbrace 5,7\right\rbrace$.
\item We resolve the problem $x=5$  for which we have $\textit{inner} \left\lbrace  f(x^{oh})^{ih}_{NE}, f(x^{ov})^{iv}_{NE} \right\rbrace=7$ and update $f$ such that $f(5)=7$. In order to recompute $P(f)$, we only need to check $x^{ih}=2$ since $x^{iv}$ is not defined. The choice $f(2)=2$ does not create a problem with this  new choice for $f(5)$.
We cannot possibly have resolved the problem $7$ at the same time, so it remains in $P(f)$ and we have $P(f)= \left\lbrace 7\right\rbrace $.
\item We resolve the problem $x=7$ for which we have $\textit{inner} \left\lbrace  f(x^{oh})^{ih}_{NW}, f(x^{ov})^{iv}_{NW} \right\rbrace=9$ and update $f$ such that $f(7)=9$. In order to recompute $P(f)$, we only need to check $x^{iv}=6$ since $x^{ih}$ is not defined. The choice $f(6)=8$ does not create a problem with this  new choice for $f(7)$.
\item The algorithm has found the minimal embedding $e=e_{\min}$ of the non-central elements of $\pi$ into $\tau$ defined as follows: 
$f(1)=1$, $f(7)=9$, $f(2)=2$, $f(5)=7$ and $f(6)=8$.
\item We need to map the central elements $3$ and $4$ of $\pi$ into the remaining part of $\tau$ (marked by a dotted line in Figure~\ref{fig-skew-merged-ex}). Since the central elements of $\pi$ consist of an increasing subsequence of size two, we can choose any such subsequence within the dotted area in $\tau$. We decide to set $f(3)=3$ and $f(4)=5$ which finally gives an embedding of $\pi$ into $\tau$.
\end{enumerate} 
\end{example}

\section{Concluding Remarks}

We conclude by mentioning some open problems related to this work. We have seen in Theorem~\ref{thm-complexity-321} that \textsc{$\Av(321)$-PPM} can be solved in $O(kn)$ time. Guillemot and Vialette showed that the more general \textsc{$\Av(321)$-pattern PPM} problem can be solved in $\mathcal{O}(k n^{4 \sqrt{k}+12})$ time. It is an open problem whether \textsc{$\Av(321)$-pattern PPM} can be solved in polynomial time. Note that if the pattern avoids $132$, $231$, $213$ or $312$ then it is automatically separable and thus the \textsc{$\C$-PPM} problem and the \textsc{$\C$-pattern PPM} problem for all four classes $\Av(132)$, $\Av(231)$, $\Av(213)$ or $\Av(312)$ can be solved in polynomial time. Consequently the \textsc{$\Av(321)$-pattern PPM}---which is equivalent to the \textsc{$\Av(123)$-pattern PPM}---is the only open case for \textsc{$\Av(\beta)$-pattern PPM} where $\beta$ has size $3$.

In case \textsc{$\Av(321)$-pattern PPM} turns out to be \textsf{NP}-complete, \textsc{$\Av(\beta)$-pattern PPM} will also be \textsf{NP}-complete if $\beta$ is any permutation of size four other than $2143$, $3142$, $2413$, or $3412$. Interestingly, this list contains exactly those patterns that define the classes of skew-merged and of separable permutations. Moreover, \textsf{NP}-completeness of \textsc{$\Av(321)$-pattern PPM} would imply that \textsc{$\Av(\beta)$-pattern PPM} is \textsf{NP}-complete for $\beta$ of size five or more, since by Erd\H{o}s--Szekeres Theorem~\cite{erdos:a-combinatorial:} every permutation of size at least five contains $123$ or $321$.

Looking at the big picture, Theorems~\ref{thm-complexity-321} and \ref{thm-complexity-skew-merged} show that \textsc{$\C$-PPM} can be solved in polynomial time for $\Av(321)$ and $\Av(2143,3412)$, respectively. It might be that \textsc{$\C$-PPM} is always polynomial-time solvable for a fixed, proper class $\C$. It would be of considerable interest to either establish this statement or to prove a dichotomy theorem that distinguishes permutation classes for which \textsc{$\C$-PPM} is polynomial-time solvable and those that yield hard \textsc{$\C$-PPM} instances. The same question can be asked for \textsc{$\C$-pattern PPM}, although it seems rather unlikely that this problem is polynomial time solvable for every fixed, proper class $\C$.

\medskip
\noindent\textbf{Note added in proof.}
After a draft of this paper was posted on the arXiv, Jel{\'{\i}}nek and Kyn{\v{c}}l~\cite{jelinek:hardness-of-per:} established that the \textsc{$\Av(\beta)$-pattern PPM} problem is indeed \textsf{NP}-complete for every
\[
	\beta\notin\{1,12,21,132,213,231,312\}.
\]
They further showed that the \textsc{$\Av(4321)$-PPM} problem is \textsf{NP}-complete, even when the pattern is restricted to be $321$-avoiding.

\bibliographystyle{abbrv}
\bibliography{refs}
\label{sec:biblio}

\end{document}